\theoremstyle{plain}
\newtheorem{theorem}{Theorem}
\newtheorem{proposition}{Proposition}
\newtheorem{lemma}{Lemma}
\newtheorem{claim}{Claim}
\theoremstyle{definition}
\newtheorem{definition}{Definition}
\theoremstyle{remark}
\newtheorem{remark}{Remark}
\newcommand{\interior}[1]{\ensuremath{\mathring{#1}}}
\title[]{Generalized Whitney topologies are Baire}
\subjclass[2010]{Primary: 58-XX; Secondary: 54E52, 46E35, 26A16.}
\keywords{Genericity, Whitney topology, H\"older classes, Sobolev classes.}
\thanks{
This work has been supported by 
``Projeto Tem\'atico Din\^amica e Geometria em Baixas Dimens\~oes'' FAPESP Grant 2016/25053-8, 
FAPESP Grant 2015/17909-7, 
CAPES Grant CSF-PVE-S - 88887.117899/2016-00, 
a CAPES/PNPD Grant 
and the EU Marie-Curie IRSES Brazilian-European partnership in Dynamical Systems (FP7-PEOPLE-2012-IRSES 318999 BREUDS)}
\author{Edson de Faria} 
\address{Edson de Faria, Instituto de Matem\'{a}tica e Estat\'{i}stica, USP, S\~{a}o Paulo, SP, Brazil}
\email[]{edson@ime.usp.br}
\author{Peter Hazard} 
\address{Peter Hazard, Instituto de Matem\'{a}tica e Estat{\'i}stica, USP, S\~{a}o Paulo, SP, Brazil}
\email[]{pete@ime.usp.br}
\date{\today}
\begin{document}

\newpage

%
%
%


\begin{abstract}
In this paper we show that certain generalizations of the 
$C^r$-Whitney topology, which include the H\"older-Whitney 
and Sobolev-Whitney topologies on smooth manifolds, 
satisfy the Baire property, to wit, 
the countable intersection of open and dense sets is dense.
\end{abstract}

\maketitle

\section{Introduction.}
%

The study of {\it generic\/} properties in topological spaces plays an important role not only in Topology, but also in Analysis, 
Geometry and Dynamics. A property is said to be {\it generic\/} 
in a topological space $X$ if it holds in a {\it residual\/}, or {\it Baire second-category\/}, subset  
of $X$ -- in other words, in a set containing the intersection of a countable family of 
open and dense subsets of $X$. For this notion to be useful, the topology with which $X$ is endowed should 
make it into a {\it Baire space\/} -- that is to say, a space in which all Baire second-category sets are 
dense (see \S \ref{subsect:baireprop} below). 
Thus, in such spaces residual sets are large, whereas their complements -- 
the so-called {\it meager\/} sets -- 
are small. The topological dichotomy residual sets/meager sets is akin to the measure-theoretic dichotomy full-measure sets/null sets, 
and a comparison 
between these dichotomies is often quite fruitful (see~\cite{Oxtoby}).


In the case when $X$ is the class of $C^r$ smooth maps between two smooth manifolds $M$ and $N$, the most useful topology 
is the $C^r$ compact-open topology, also called the (strong) {\it $C^r$ Whitney topology\/}. It is a classical result that 
the $C^r$ Whitney topology on $C^r(M,N)$ makes it into a Baire space (see 
\cite[Ch.~2]{HirschBook}).
This topology was introduced by H.~Whitney in his study of embeddings and immersions 
of smooth manifolds into Euclidean spaces, and also in his study of stability properties of singularities of mappings. 
Since then, its use in Differential Topology (including Morse Theory), Geometry, and Dynamics, has become pervasive. 

In Dynamics, the generic viewpoint has its origins in the study of structurally stable flows 
on surfaces by M.~Peixoto \cite{Peixoto}, and in the subsequent works by I.~Kupka \cite{Kupka} and S.~Smale \cite{Smale} 
establishing the structural stability 
of what are now known as Kupka-Smale diffeomorphisms. 

In the present paper, we prove the Baire property for certain generalizations of Whitney topologies that are tailor-made for spaces of mappings 
of low regularity (such as H\"older or Sobolev mappings). 
The classical proof of the Baire property for the strong $C^r$ Whitney topology uses jets,
but it is not clear what replaces this notion for more general classes of mappings.
The approach we adopt below circumvents this problem.

Generalized Whitney topologies provide the correct framework for the study of generic properties of low-regularity dynamical systems. 
We have in mind specific applications to the study of {\it topological entropy\/} for these low-regularity systems. 
Explicit examples are provided by our papers \cite{deFariaHazardTresser2017a,deFariaHazardTresser2017b} in collaboration with C.~Tresser. 
See \S \ref{subsect:main} for our main result (Theorem \ref{thm:Whitney_topologies_are_Baire}) and \S \ref{sect:examples} for 
the relevant examples.

\begin{remark}\label{rmk:mappings_not_homeos}
We will only consider spaces of (possibly non-invertible) mappings, rather than spaces of homeomorphisms.
However, with obvious changes the same results go through 
in the latter case as well.
\end{remark}
%

\section{Preliminaries.}

\subsection{The Baire property.}\label{subsect:baireprop}

Recall that a topological space $X$ {\it satisfies the Baire property\/}, or {\it is a Baire space\/}, if the 
intersection of any countable family of open and dense subsets of $X$ is dense in $X$. 
The Baire category theorem states that if $X$ is either a complete metric space or a locally compact Hausdorff space, then 
$X$ is a Baire space. (See, e.g.,~\cite[Theorem 34]{KelleyBook}.) 
In the present paper we shall make use of a straightforward generalization of this theorem, 
which we proceed to present.

We start with a definition. 
Let $(X,\tau)$ be a topological space, and 
let $\rho$ be a metric on the set $X$, 
which may or may not be compatible with the topology $\tau$. 
Given $E\subset X$ we will denote 
by $\tau|_E$ the induced topology on $E$,
by $\mathrm{int}_\rho(E)$ the interior of $E$ in the topology induced by $\rho$,
and by $\mathrm{diam}_\rho(E)$ the diameter of $E$ with respect to $\rho$.

\begin{definition} 
A $\tau$-open set $V\subset X$ is 
\emph{$\rho$-sprinkled} 
if for 
each $\tau$-open $U\subset V$ and
each $r>0$, 
there exists 
a $\rho$-closed set $E\subset U$ with 
$\rho$-diameter $\mathrm{diam}_\rho(E)<r$, 
whose 
$\tau$-interior $\mathrm{int}_\tau(E)$ is non-empty.
\end{definition}
\begin{lemma}\label{lem:sprinkled}
Let 
$(X,\tau)$ be a topological space and 
let 
$\rho$ be a complete metric on $X$ 
such that
every $\tau$-open set in  
$(X,\tau)$ is $\rho$-sprinkled. 
Then $(X,\tau)$ is a Baire space. 
\end{lemma}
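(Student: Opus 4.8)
The plan is to adapt the classical proof of the Baire category theorem for complete metric spaces, but working with two topologies at once: using $\tau$-open sets to locate the nested sets, and the $\rho$-metric (together with its completeness) to extract the limit point. Let $\{W_n\}_{n\ge 1}$ be a countable family of $\tau$-open dense subsets of $X$, and let $W_0$ be an arbitrary nonempty $\tau$-open set; I must show $W_0\cap\bigcap_{n\ge1}W_n\neq\emptyset$. Since $X$ itself is $\tau$-open, it is $\rho$-sprinkled; more to the point I will repeatedly invoke the sprinkled property of $X$ (or equivalently of the large $\tau$-open sets arising in the construction) to produce $\rho$-closed sets with small $\rho$-diameter and nonempty $\tau$-interior sitting inside prescribed $\tau$-open sets.

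First I would build, by induction, a decreasing sequence of sets. Start with the nonempty $\tau$-open set $U_0 := W_0\cap W_1$ (nonempty because $W_1$ is $\tau$-dense). Applying the sprinkled hypothesis to the $\tau$-open set $X$ with the $\tau$-open subset $U_0$ and radius $r_0=1$, I get a $\rho$-closed set $E_1\subset U_0$ with $\mathrm{diam}_\rho(E_1)<1$ and $\mathrm{int}_\tau(E_1)\ne\emptyset$. Set $V_1:=\mathrm{int}_\tau(E_1)$, a nonempty $\tau$-open set contained in $E_1\subset W_0\cap W_1$. Inductively, given a nonempty $\tau$-open set $V_n\subset E_n$, form the nonempty $\tau$-open set $U_n:=V_n\cap W_{n+1}$ (nonempty by $\tau$-density of $W_{n+1}$), apply the sprinkled property to obtain a $\rho$-closed $E_{n+1}\subset U_n$ with $\mathrm{diam}_\rho(E_{n+1})<2^{-n}$ and nonempty $\tau$-interior, and put $V_{n+1}:=\mathrm{int}_\tau(E_{n+1})$. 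By construction $E_{n+1}\subset U_n\subset V_n\subset E_n$, so $\{E_n\}$ is a nested decreasing sequence of nonempty $\rho$-closed sets with $\rho$-diameters tending to $0$, and each $E_n$ is contained in $W_0\cap W_1\cap\cdots\cap W_n$.

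Then I would invoke Cantor's intersection theorem for the complete metric $\rho$: a nested sequence of nonempty $\rho$-closed sets with $\rho$-diameters shrinking to zero has nonempty intersection — in fact $\bigcap_n E_n=\{x\}$ for a single point $x\in X$. Since $E_n\subset W_0\cap\bigcap_{k=1}^n W_k$ for every $n$, we get $x\in W_0\cap\bigcap_{n\ge1}W_n$, which proves that the countable intersection $\bigcap_{n\ge1}W_n$ meets every nonempty $\tau$-open set $W_0$, i.e.\ it is $\tau$-dense. Hence $(X,\tau)$ is a Baire space.

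The main subtlety — and the reason the definition of $\rho$-sprinkled is phrased the way it is — is bookkeeping about which topology controls which step: the nonemptiness at each stage of the induction comes from $\tau$-density of the $W_n$'s intersected with the previous $\tau$-open box $V_n$, whereas the smallness needed to run Cantor's theorem comes from the $\rho$-diameters, and these two must be threaded together through the sprinkled property, which is exactly the bridge converting a $\tau$-open set into a $\rho$-small set with nonempty $\tau$-interior. One should be mildly careful that $\rho$ need not be compatible with $\tau$, so that $\tau$-closure and $\rho$-closure differ and $\rho$-balls need not be $\tau$-open; but the construction above never needs such compatibility — it only uses that each $E_n$ is $\rho$-closed (for Cantor) and that each $V_n=\mathrm{int}_\tau(E_n)$ is $\tau$-open and nonempty (to keep intersecting with the dense $W_n$'s). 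That is the only place care is required, and it is handled automatically by using $V_n$ rather than $E_n$ as the "box" passed to the next step.
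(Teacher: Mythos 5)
Your proof is correct and follows essentially the same route as the paper's: an inductive application of the sprinkling property to produce nested $\rho$-closed sets of shrinking $\rho$-diameter with nonempty $\tau$-interiors inside successive intersections with the dense open sets, then completeness of $\rho$ to extract a common point. The only cosmetic difference is that you cite Cantor's intersection theorem where the paper runs the Cauchy-sequence argument explicitly, which is the same reasoning.
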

\begin{remark}
The property of being $\rho$-sprinkled is hereditary. 
Thus, from the proof given below, 
to show the Baire property it is enough to 
have an open covering by $\rho$-sprinkled sets.
\end{remark}
\begin{proof}
Let 
$\{G_n\}_{n\in\mathbb{N}}$ 
be a sequence of $\tau$-open-and-dense subsets of $X$, 
and let 
$V\subset X$ 
be $\tau$-open and non-empty. 
Since 
$V\cap G_1$ 
is non-empty and $\tau$-open, 
by the sprinkling property there exists 
$E_1\subset V\cap G_1$ 
which is $\rho$-closed, with 
$\mathrm{diam}_\rho(E_1)<\frac{1}{2}$ 
and 
$\mathrm{int}_\tau(E_1)$ non-empty. 
By induction, suppose that 
$E_n\subset \mathrm{int}_\tau(E_{n-1})\cap G_n$ 
has been defined so that it is $\rho$-closed, 
has 
$\rho$-diameter $\mathrm{diam}_\rho(E_n)<\frac{1}{2^n}$ 
and 
$\mathrm{int}_\tau(E_{n})$ is non-empty. 
By the sprinkling property, since 
$\mathrm{int}_\tau(E_{n})\cap G_{n+1}$ is $\tau$-open and non-empty, 
there exists 
$E_{n+1}\subset \mathrm{int}_\rho(E_{n})\cap G_{n+1}$ 
which is $\rho$-closed, 
has $\rho$ diameter $\mathrm{diam}_\rho(E_{n+1})<\frac{1}{2^{n+1}}$, 
and whose $\tau$-interior is non-empty. 
By construction, we have 
$V\supset E_1\supset E_2\supset \cdots \supset E_n\supset \cdots$. 
Selecting 
$x_n\in E_n$ for each $n$, we see that $(x_n)_{n\in\mathbb{N}}$ is a Cauchy sequence for the complete metric $\rho$. 
Therefore the $\rho$-limit $x=\lim_{n\to\infty}x_n$ exists. 
Since each $E_n$ is $\rho$-closed, and 
since $x_k\in E_n$ for all $k\geq n$, we deduce that 
\begin{equation}
x\;\in\; \bigcap_{n\in\mathbb{N}} E_n\;\subset V\cap \bigcap_{n\in\mathbb{N}} G_n\ .
\end{equation}
Thus $\bigcap_{n\in\mathbb{N}} G_n$ is $\tau$-dense in $X$, and so $(X,\tau)$ is a Baire space as claimed. 
\end{proof}
\begin{definition}
The topological space $(X,\tau)$ is 
{\it locally sprinkled} 
if 
for each non-empty $\tau$-open set $Y$ 
there exists 
a set $Z$ contained in $Y$, 
with non-empty $\tau$-interior $\mathrm{int}_\tau(Z)$,
and a complete metric $\rho_Z$ on $Z$, 
such that 
$\mathrm{int}_\tau(Z)$ is $\rho_Z$-sprinkled.
\end{definition}
\begin{lemma}\label{lem:locally_sprinkled}
A locally sprinkled topological space $(X,\tau)$ is Baire.
\end{lemma}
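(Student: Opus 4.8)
The plan is to reduce to the nested-construction argument already carried out in the proof of Lemma~\ref{lem:sprinkled}, performed this time inside a single piece supplied by the local-sprinkling hypothesis. Since being Baire is equivalent to the assertion that $V\cap\bigcap_{n}G_n\neq\emptyset$ for every non-empty $\tau$-open set $V$ and every sequence $\{G_n\}_{n\in\mathbb{N}}$ of $\tau$-open-and-dense subsets of $X$, I would fix such $V$ and $\{G_n\}$ and apply the hypothesis to $Y=V$. This produces a set $Z\subset V$ with $W:=\mathrm{int}_\tau(Z)\neq\emptyset$ and a complete metric $\rho_Z$ on $Z$ for which $W$ is $\rho_Z$-sprinkled. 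All subsequent work will take place inside $W$ (hence inside $Z$), so that the metric $\rho_Z$ is always available and its completeness on $Z$ can be invoked.

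Next I would run the inductive construction of Lemma~\ref{lem:sprinkled} essentially verbatim, with $W$ in the role of the ambient open set and $\rho_Z$ in the role of $\rho$. Concretely: since $W\cap G_1$ is $\tau$-open and non-empty, $\rho_Z$-sprinkling of $W$ yields a $\rho_Z$-closed set $E_1\subset W\cap G_1$ with $\mathrm{diam}_{\rho_Z}(E_1)<\tfrac12$ and $\mathrm{int}_\tau(E_1)\neq\emptyset$; and inductively, given $E_n$ one forms the non-empty $\tau$-open set $\mathrm{int}_\tau(E_n)\cap G_{n+1}$ and extracts a $\rho_Z$-closed $E_{n+1}\subset\mathrm{int}_\tau(E_n)\cap G_{n+1}$ with $\mathrm{diam}_{\rho_Z}(E_{n+1})<2^{-(n+1)}$ and non-empty $\tau$-interior. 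The one thing that must be checked here (and it is really the only point beyond Lemma~\ref{lem:sprinkled}) is that this construction never leaves $W$: since $E_1\subset W$ and $E_{n+1}\subset\mathrm{int}_\tau(E_n)\subset E_n$, all the $E_n$ lie in $W$, so each $\mathrm{int}_\tau(E_n)$ is a $\tau$-open subset of $W$, and the $\rho_Z$-sprinkling hypothesis, assumed only for $W$, legitimately applies at every stage.

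Finally, choosing $x_n\in E_n$ gives a $\rho_Z$-Cauchy sequence contained in the $\rho_Z$-closed set $E_1$; completeness of $\rho_Z$ on $Z$, hence on its $\rho_Z$-closed subset $E_1$, yields a limit $x\in E_1$, and $\rho_Z$-closedness of each $E_n$ together with $x_k\in E_n$ for $k\geq n$ gives $x\in\bigcap_n E_n$. Chasing the inclusions $E_1\subset W\cap G_1\subset V\cap G_1$ and $E_n\subset G_n$ for every $n$, one concludes $x\in V\cap\bigcap_n G_n$, which is therefore non-empty; hence $\bigcap_n G_n$ is $\tau$-dense and $(X,\tau)$ is Baire. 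I expect no serious obstacle; the only subtlety, already isolated above, is that $\rho_Z$ is defined only on $Z$ and need not be compatible with $\tau$, so one must keep the whole Cauchy-sequence argument inside $Z$ and appeal to completeness of $\rho_Z$ rather than to any global metric on $X$.
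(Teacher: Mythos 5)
Your proposal is correct and follows essentially the same route as the paper: apply the local-sprinkling hypothesis to the given open set and rerun the nested-set construction of Lemma~\ref{lem:sprinkled} inside $\mathrm{int}_\tau(Z)$ with the complete metric $\rho_Z$. Your explicit check that every $E_n$ stays inside $\mathrm{int}_\tau(Z)$, so that both the sprinkling hypothesis and the completeness of $\rho_Z$ remain applicable, is exactly the point the paper leaves implicit in its phrase ``by the same argument as in the preceding Lemma.''
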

\begin{proof}
Let 
$\{G_n\}_{n\in\mathbb{N}}$ 
be a countable collection of $\tau$-open-and-dense subsets of $X$.
To show that 
$\bigcap_{n\in\mathbb{N}}G_n$ 
is $\tau$-dense, 
it suffices to show that 
$\bigcap_{n\in\mathbb{N}}G_n$ 
intersects each $\tau$-open set $Y$ in $X$.
By hypothesis, $Y$ contains a set $Z$ 
with non-empty $\tau$-interior $\mathrm{int}_\tau(Z)$ 
which is $\rho_Z$-sprinkled by some complete metric $\rho_Z$ on $Z$.
By the same argument as in the preceding Lemma, 
$Z\cap\bigcap_{n\in\mathbb{N}}G_n$ is non-empty and the result follows.
\end{proof}

\subsection{Families of norms}\label{subsect:family_of_norms}
Assume that to each compact  
$K\subset\mathbb{R}^m$ 
we assign a semi-norm $[\,\cdot\,]_K$ on 
$C^0(K,\mathbb{R}^d)$. 
Denote the family of such semi-norms by $\mathcal{F}$.
For each compact subset $K$ of $\mathbb{R}^m$,
define the norm
\begin{equation}
\|\cdot\|_K
=
\|\cdot\|_{C^0(K,\mathbb{R}^d)}+[\,\cdot\,]_K \ .
\end{equation}
We will refer to this as the $\mathcal{F}$-norm 
corresponding to the compact set $K$.
Let 
$C^\mathcal{F}(K,\mathbb{R}^d)\subset C^0(K,\mathbb{R}^d)$ 
denote the subspace of mappings for which 
$\|\cdot\|_K$ 
is finite.
Observe that 
$C^\mathcal{F}(K,\mathbb{R}^d)$, 
endowed with this norm, is a Banach space.
In what follows we will assume that, 
for each compact set $K$, the semi-norm 
$[\,\cdot\,]_K$ 
satisfies the following properties:
\begin{enumerate}
\item[(I)] {\sl Composition property\/}.
For any smooth diffeomorphisms onto their images $\psi$ and $\varphi$,
there exists a positive real number $\kappa$
such that
\begin{equation*}
\left[\psi\circ g\circ\varphi^{-1}-\psi\circ g'\circ\varphi^{-1}\right]_{\varphi(K)}
\leq
\kappa \|g-g'\|_K \ ,
\quad
\forall g,g'\in C^\mathcal{F}\left(K,\mathbb{R}^d\right) \ .
\end{equation*}
Moreover, 
for $\|g-g'\|_K$ sufficiently small,
$\kappa$ depends only upon  
the derivatives of $\psi$ and $\varphi$ and on $\max \left\{[g]_K,[g']_K\right\}$.
\item[(II)] {\sl Gluing property\/}.
Given a finite collection of compact sets 
$\{K_s\}_{s\in S}$, 
let 
$K=\bigcup_{s\in S}K_s$.
There exists a positive real number $C$, 
depending upon 
$\{K_s\}_{s\in S}$ 
only, such that
\begin{equation*}
[g]_K\leq C \sum_{s\in S} [g]_{K_s} \ , 
\quad 
\forall g\in C^\mathcal{F}\left(K,\mathbb{R}^d\right) \ .
\end{equation*}
\item[(III)] {\sl Monotonicity property\/}.
If $K'\subset K$ then 
\begin{equation*}
[g]_{K'}
\leq 
[g]_{K} \ , 
\quad 
\forall g\in C^\mathcal{F}\left(K,\mathbb{R}^d\right) .
\end{equation*}
\end{enumerate}
\begin{remark}
In property (I) we require $\psi$ and $\varphi$ to be of class 
$C^\infty$ as below we will consider when they are the coordinates or transition maps
of manifolds of class $C^\infty$. 
If we consider manifolds of class $C^r$, or manifolds 
whose transition maps lie in some other pseudogroup, 
then property (I) must be changed accordingly. 
However, we only consider the smooth case as this 
simplifies the statements in Section~\ref{sect:examples}.
\end{remark}
\begin{remark}
The sup-norm $\|\cdot\|_{C^0(K,\mathbb{R}^d)}$ satisfies properties (I)--(III). 
Consequently, the norm $\|\cdot\|_K$ also satisfies (I)--(III). 
\end{remark}
\begin{remark}\label{rmk:sums_of_semi-norm_families}
Any positive linear combination of semi-norms satisfying (I)--(III) will also satisfy (I)--(III).
Thus the families of semi-norms $\mathcal{F}$ satisfying (I)--(III) form a positive cone in the space all such families. 
\end{remark}
%
%
\subsection{Whitney topologies}
Let $M$ and $N$ be smooth manifolds.
Let 
$C^\mathcal{F}(M,N)$ 
denote the set of mappings 
$f\in C^0(M,N)$ 
so that in local charts, 
$f$ has finite $\mathcal{F}$-norm on each compact subset. 
Let
$(U,\varphi)$ be a chart in $M$;
$(V,\psi)$ be a chart in $N$; 
$K$ a compact subset of $U$; 
$\epsilon$ a positive extended real number; 
and let
$f\in C^\mathcal{F}(M,N)$ satisfy 
$f(K)\subset V$.
Denote by
\begin{equation}\label{def:subbasic}
\mathcal{N}\left(f;(U,\varphi),(V,\psi),K,\epsilon\right)
\end{equation}
the set of maps 
$g\in C^\mathcal{F}(M,N)$ 
satisfying 
$g(K)\subset V$ 
and
\begin{equation}\label{ineq:N-ball1}
\left\|\psi \circ f\circ\varphi^{-1}-\psi\circ g\circ\varphi^{-1}\right\|_{\varphi(K)}
<
\epsilon \ .
\end{equation}
%
We call the topology generated by 
the sets~\eqref{def:subbasic} 
the {\it weak $\mathcal{F}$-Whitney topology}.
We call sets of the form~\eqref{def:subbasic}, 
{\it weak sub-basic neighbourhoods}.
%
%
%
Let 
\begin{itemize}
\item 
$\Phi=\{(U_t,\varphi_t)\}_{t\in T}$ be a {\it locally finite} collection of charts on $M$,
\item 
$\Psi=\{(V_t,\psi_t)\}_{t\in T}$ a collection of charts on $N$,
\item 
$K=\{K_t\}_{t\in T}$ be a collection of compact sets, $K_t\subset U_t$, 
\item 
$\epsilon=\{\epsilon_t\}_{t\in T}$ be a collection of positive extended real numbers,
\end{itemize}
and let $f\in C^\mathcal{F}(M,N)$ satisfy $f(K_t)\subset V_t$ for each $t\in T$.
Define the {\it strong basic neighbourhood}
\begin{equation}\label{def:strong_basic_nhd}
\mathcal{N}_{s}(f;\Phi,\Psi,K,\epsilon)
\end{equation}
to be the collection of maps $g\in C^\mathcal{F}(M,N)$ such that, 
for each $t\in T$, 
$g(K_t)\subset V_t$
and
\begin{equation}
\left\|\psi_t\circ f\circ\varphi_t^{-1}-\psi_t\circ g\circ\varphi_t^{-1}\right\|_{\varphi_t(K_t)}
<
\epsilon_t \ .
\end{equation}  
The {\it strong $\mathcal{F}$-Whitney topology} is the topology 
with the collection of all strong basic neighbourhoods as a base.
\subsection{Basic properties.}
We begin with the following straightforward observation.
\begin{proposition}
\begin{enumerate}
\item
The weak and strong $\mathcal{F}$-Whitney topologies are Hausdorff.
\item
The strong $\mathcal{F}$-Whitney topology is finer than 
the weak $\mathcal{F}$-Whitney topology, 
and they coincide when $M$ is compact. 
\end{enumerate}
\end{proposition}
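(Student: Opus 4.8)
The plan is to establish the assertions in the order ``strong finer than weak'' $\to$ ``Hausdorff'' $\to$ ``coincidence for compact $M$'', each step using the previous one. First I would observe that the strong topology is finer than the weak one: a weak sub-basic neighbourhood $\mathcal{N}(f;(U,\varphi),(V,\psi),K,\epsilon)$ coincides, \emph{as a set}, with the strong basic neighbourhood $\mathcal{N}_s(f;\Phi,\Psi,K,\epsilon)$ obtained by taking the index set $T$ to be a singleton and $\Phi=\{(U,\varphi)\}$, $\Psi=\{(V,\psi)\}$, $K=\{K\}$, $\epsilon=\{\epsilon\}$ (a one-element family of charts is trivially locally finite). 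Hence every weak sub-basic set is strong-open; since each weak-open set is a union of finite intersections of such sub-basic sets, and finite intersections and arbitrary unions of strong-open sets are strong-open, every weak-open set is strong-open.

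Next I would prove the weak topology is Hausdorff, from which Hausdorffness of the strong topology follows because any topology finer than a Hausdorff one is Hausdorff. Given distinct $f,g\in C^\mathcal{F}(M,N)$, pick $p\in M$ with $f(p)\neq g(p)$; since $N$ is a manifold (hence Hausdorff and locally Euclidean) choose charts $(V_1,\psi_1)\ni f(p)$ and $(V_2,\psi_2)\ni g(p)$ with $V_1\cap V_2=\emptyset$, and by continuity of $f$ and $g$ choose a chart $(U,\varphi)$ about $p$ with $f(U)\subset V_1$ and $g(U)\subset V_2$. With $K=\{p\}$ the sets $\mathcal{N}(f;(U,\varphi),(V_1,\psi_1),K,1)$ and $\mathcal{N}(g;(U,\varphi),(V_2,\psi_2),K,1)$ are weak-open neighbourhoods of $f$ and $g$ respectively, and they are disjoint: any map $h$ lying in both would have to satisfy $h(p)\in V_1\cap V_2=\emptyset$.

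For the coincidence of the two topologies when $M$ is compact, by the first step it only remains to show that every strong basic neighbourhood is weak-open. The key point is that a locally finite family of nonempty open subsets of a compact space is finite: cover $M$ by open sets each meeting only finitely many members of the family, pass to a finite subcover, and note that every member of the family, being nonempty, meets some set of the subcover, so the indexing set is finite. Applying this to the chart collection $\Phi=\{(U_t,\varphi_t)\}_{t\in T}$, the strong basic neighbourhood $\mathcal{N}_s(f;\Phi,\Psi,K,\epsilon)$ is, directly from the definitions, equal to the finite intersection $\bigcap_{t\in T}\mathcal{N}(f;(U_t,\varphi_t),(V_t,\psi_t),K_t,\epsilon_t)$ of weak sub-basic neighbourhoods, and hence is weak-open.

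The proof is essentially bookkeeping and I do not expect a genuine obstacle; the two places that need a little care are: (i) in the Hausdorff step, exploiting the constraint ``$h(K)\subset V$'' built into the definition of $\mathcal{N}(\cdot)$ — rather than merely shrinking the radius $\epsilon$ — to force the two neighbourhoods to be disjoint even when $f(p)$ and $g(p)$ lie in no common chart; and (ii) correctly deriving finiteness of the indexing set $T$ of the locally finite family $\Phi$ from compactness of $M$, which is what makes the strong basic neighbourhood a \emph{finite} intersection of weak sub-basic ones.
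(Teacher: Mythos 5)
Your proof is correct. The paper itself treats this proposition as a straightforward observation and offers no argument in the compiled text (only a commented-out sketch of the Hausdorff part survives in the source), so your write-up is more complete than what the paper records. Where you do overlap, your route differs: the paper's sketch separates $f_0\neq f_1$ inside a \emph{single} target chart $(V,\psi)$ about $f_0(x)$, setting $\delta=|\psi\circ f_0\circ\varphi^{-1}(x)-\psi\circ f_1\circ\varphi^{-1}(x)|$ and using the triangle inequality to show the two $\tfrac{\delta}{2}$-neighbourhoods are disjoint, with a separate (and somewhat awkward) case when $f_1(K)\not\subset V$; you instead choose \emph{disjoint} target charts and let the built-in membership constraint $h(K)\subset V$ do all the work, which avoids any metric estimate and the case analysis entirely, and then get Hausdorffness of the strong topology for free since it refines the weak one. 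Your other two steps are exactly the bookkeeping the paper leaves to the reader, and both are carried out correctly: a weak sub-basic set is literally a strong basic set with a singleton (hence locally finite) chart family, and on compact $M$ a locally finite family of nonempty chart domains must have finite index set (your covering argument is right; indices with empty domain impose vacuous conditions and may simply be discarded), so a strong basic neighbourhood becomes a finite intersection of weak sub-basic neighbourhoods centred at the same $f$ and is therefore weak-open.
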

\begin{remark}\label{rmk:locally_finite_covers_are_countable}
Given a collection of charts $\Phi$ of $M$ which are 
locally finite, 
since the manifolds we consider are second countable 
(and hence each subspace of $M$ is also second countable) 
it follows that this collection $\Phi$ is countable.
\begin{comment}
Recall a space is {\it Lindel\"of} if every open 
cover possesses a countable open subcover.  
No uncountable open cover in a Lindel\"of space is 
locally finite.

{\it Proof\/}:
Let $\mathcal{U}$ be an open cover of $X$.
For each $x\in X$, let $V_x$ be an open neighbourhood of $x$ in $X$ such that $V_x$ intersects finitely many $U\in\mathcal{U}$.
Then $\mathcal{V}=\{V_x\}_{x\in X}$ is an open cover of $X$.
The Lindel\"of property implies that there exists a countable subcover, by $V_{x_1}, V_{x_2},\ldots$ say.
But $V_{x_k}$ intersects finitely many $U\in\mathcal{U}$.
Thus $X$ intersects the collection $\mathcal{U}$ at only countably many $U\in\mathcal{U}$.
/\!/
\end{comment}
\end{remark}
\begin{remark}\label{rmk:cpt_cover_with_covering_interiors}
Given any cover of $M$ by compact sets 
$\{K_t\}_{t\in T}$, 
where each $K_t$ lies in some open set $U_t$ 
(in the cases we consider this will be the domain of some chart), 
we can construct a new covering 
$\{K_t'\}_{t\in T}$ 
of $M$ by compact sets, 
so that $K_t'$ also lies in $U_t$, 
and with the additional property that the interiors 
$\{\interior{K}_t'\}_{t\in T}$ 
form an open cover of $M$.
\end{remark}
\begin{remark}
Given charts $(U,\varphi)$ and $(V,\psi)$ of $M$ and $N$ respectively,
a mapping $f\in C^\mathcal{F}(M,N)$,
and compact sets $K\subset K'$ in $U$ so that $f(K')\subset V$,
for any $\epsilon>0$ we have
\begin{equation}
\mathcal{N}(f;(U,\varphi),(V,\psi),K',\epsilon)\subset
\mathcal{N}(f;(U,\varphi),(V,\psi),K,\epsilon) \ .
\end{equation}
\end{remark}
\begin{proposition}\label{prop:change_of_charts-local}
Fix
a chart $(U,\varphi)$ in $M$, 
a chart $(V,\psi)$ in $N$, and 
a set $K$ compact in $U$.
Take
\begin{itemize}
\item
$\{(U_t,\varphi_t)\}_{t\in T}$ a collection of charts in $M$, 
\item
$\{(V_t,\psi_t)\}_{t\in T}$ a collection of charts in $N$, 
\item
$\{K_t\}_{t\in T}$ a collection of compact sets covering $K$, with $K_t$ compact in $U_t$.
\end{itemize}
There exists a positive real numbers 
$C_0$, $C_\sigma$, and $C$, 
depending upon 
$K$, 
the collection of $K_t$ intersecting $K$ and 
the collections of corresponding charts only,
such that for all suitably defined $g$ and $g'$,
\begin{align}
&
\|\psi\circ g\circ\varphi^{-1}
-\psi\circ g'\circ\varphi^{-1}\|_{C^0(\varphi(K),\mathbb{R}^d)}\notag\\
&\leq
C_0
\sup_{t\in T} 
\|\psi_t\circ g\circ\varphi_t^{-1}
-\psi_t\circ g'\circ\varphi_t^{-1}\|_{C^0(\varphi_t(K_t),\mathbb{R}^d)}\tag{a}\\
&
[\psi\circ g\circ\varphi^{-1}
-\psi\circ g'\circ\varphi^{-1}]_{\varphi(K)}\notag\\
&\leq
C_\sigma\sum_{t\in T : K\cap K_t\neq\emptyset}
\|\psi_t\circ g\circ\varphi_t^{-1}
-\psi_t\circ g'\circ\varphi_t^{-1}\|_{\varphi_t(K_t)}\tag{b}\\
&
\|\psi\circ g\circ\varphi^{-1}
-\psi\circ g'\circ\varphi^{-1}\|_{\varphi(K)}\notag\\
&\leq
C
\sum_{t\in T : K\cap K_t\neq\emptyset} 
\|\psi_t\circ g\circ\varphi_t^{-1}
-\psi_t\circ g'\circ\varphi_t^{-1}\|_{\varphi_t(K_t)}\tag{c}
\end{align}
\end{proposition}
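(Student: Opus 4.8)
The plan is to decompose $K$ according to the given cover, to transfer the estimate from the chart $(U,\varphi)$ to the charts $(U_t,\varphi_t)$ one piece at a time by means of the composition property (I), and then to reassemble using the gluing property (II) and the monotonicity property (III). After discarding the $t$ with $K\cap K_t=\emptyset$ and (by compactness of $K$) passing to a finite subcover, we may assume that $T$ is finite and that each $K_t$ meets $K$. Put $L_t:=K\cap K_t$; then each $L_t$ is compact, $L_t\subseteq K_t\cap U\subseteq U\cap U_t$, and $K=\bigcup_{t\in T}L_t$, so applying the injection $\varphi|_U$ yields $\varphi(K)=\bigcup_{t\in T}\varphi(L_t)$ with each $\varphi(L_t)$ compact.

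Write $h:=\psi\circ g\circ\varphi^{-1}-\psi\circ g'\circ\varphi^{-1}$ and $h_t:=\psi_t\circ g\circ\varphi_t^{-1}-\psi_t\circ g'\circ\varphi_t^{-1}$, and put $\alpha_t:=\psi\circ\psi_t^{-1}$, $\beta_t:=\varphi\circ\varphi_t^{-1}$, which are smooth diffeomorphisms onto their images. For suitably defined $g,g'$ one has $g(L_t)\subseteq V\cap V_t$ -- indeed $g(L_t)\subseteq g(K_t)\subseteq V_t$ and $g(L_t)\subseteq g(K)\subseteq V$ -- and similarly for $g'$, so on $\varphi(L_t)$ the identity $\psi\circ g\circ\varphi^{-1}=\alpha_t\circ(\psi_t\circ g\circ\varphi_t^{-1})\circ\beta_t^{-1}$ holds, as does its analogue for $g'$. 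Since $\beta_t(\varphi_t(L_t))=\varphi(L_t)$, the composition property (I) -- applied with the diffeomorphisms $\alpha_t$ and $\beta_t$, the maps $\psi_t\circ g\circ\varphi_t^{-1}$ and $\psi_t\circ g'\circ\varphi_t^{-1}$, and the compact set $\varphi_t(L_t)$ -- furnishes a constant $\kappa_t$ with $[h]_{\varphi(L_t)}\leq\kappa_t\|h_t\|_{\varphi_t(L_t)}$. Running the same argument with the sup-norm in place of $[\,\cdot\,]$ (which likewise satisfies (I)--(III)), and enlarging $\kappa_t$ if needed, gives $\|h\|_{C^0(\varphi(L_t))}\leq\kappa_t\|h_t\|_{C^0(\varphi_t(L_t))}$. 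Finally, as $\varphi_t(L_t)\subseteq\varphi_t(K_t)$, the monotonicity property (III) (and the obvious monotonicity of the sup-norm) allows us to replace $\varphi_t(L_t)$ by $\varphi_t(K_t)$ on the right-hand sides. By the ``moreover'' clause of (I), each $\kappa_t$ can be taken to depend only on the transition maps $\alpha_t,\beta_t$ on the pertinent compact sets, hence only on $K$, the $K_t$ meeting $K$, and the corresponding charts.

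It remains to assemble the three estimates. For (a): since $\varphi(K)$ is the finite union of the $\varphi(L_t)$, we get $\|h\|_{C^0(\varphi(K))}=\max_{t\in T}\|h\|_{C^0(\varphi(L_t))}\leq(\max_{t\in T}\kappa_t)\sup_{t\in T}\|h_t\|_{C^0(\varphi_t(K_t))}$, so $C_0:=\max_{t\in T}\kappa_t$ works. For (b): the gluing property (II) yields a constant $C_{\mathrm{II}}$, depending on the compact sets $\varphi(L_t)$ only, with $[h]_{\varphi(K)}\leq C_{\mathrm{II}}\sum_{t\in T}[h]_{\varphi(L_t)}\leq C_{\mathrm{II}}(\max_{t\in T}\kappa_t)\sum_{t:\,K\cap K_t\neq\emptyset}\|h_t\|_{\varphi_t(K_t)}$, so $C_\sigma:=C_{\mathrm{II}}\max_{t\in T}\kappa_t$ works. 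For (c): write $\|h\|_{\varphi(K)}=\|h\|_{C^0(\varphi(K))}+[h]_{\varphi(K)}$; bounding the first summand by $C_0\max_{t\in T}\|h_t\|_{C^0(\varphi_t(K_t))}\leq C_0\sum_{t:\,K\cap K_t\neq\emptyset}\|h_t\|_{\varphi_t(K_t)}$ and combining with the estimate from (b) shows that $C:=C_0+C_\sigma$ works.

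The main obstacle is the bookkeeping of the middle step: one must verify that the threefold composition identity genuinely holds on each $\varphi(L_t)$ for the maps at hand (this is why we decompose $K$ using the sets $K\cap K_t$, which actually lie inside the $K_t$), and that the constant provided by (I) can be chosen uniformly over the relevant $g,g'$ -- which is precisely the purpose of the ``moreover'' clause in (I). A secondary delicate point is the opening reduction, namely that the cover $\{K_t\}$ of the compact set $K$ admits a finite subfamily still covering $K$; this is automatic when the chart collection is locally finite, as it is in the applications, and in general is arranged by the standard device of Remark~\ref{rmk:cpt_cover_with_covering_interiors}.
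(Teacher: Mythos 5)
Your proof is correct and takes essentially the same route as the paper's: decompose $K$ into the pieces $K\cap K_t$, apply the composition property (I) together with monotonicity (III) on each piece (with the transition maps $\psi\circ\psi_t^{-1}$ and $\varphi\circ\varphi_t^{-1}$ as the diffeomorphisms), reassemble with the gluing property (II) for (b), handle (a) by the sup-norm version of the same composition estimate, and obtain (c) by adding (a) and (b). Your extra bookkeeping (passing to a finite subfamily of the $K_t$ meeting $K$, and writing out the transition identity on $\varphi(K\cap K_t)$) only makes explicit what the paper leaves implicit.
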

\begin{proof}
Inequality (a) follows by a straightforward application of 
the Lipschitz property of the smooth diffeomorphism
$\psi\circ\psi_t^{-1}$ 
over the compact set 
$K\cap K_t$, 
and then taking the supremum over all $t\in T$.
Setting 
$C_0=\sup_{t\in T}[\psi\circ\psi_t^{-1}]_\mathrm{Lip}$ 
gives the required inequality.
\begin{comment}
For 
$x\in\varphi(K\cap K_t)$,
\begin{align}
&
|\psi\circ g\circ\varphi^{-1}(x)-\psi\circ g'\circ\varphi^{-1}(x)|\notag\\
&\leq
|\psi\circ\psi_t^{-1}\circ\psi_t\circ g\circ\varphi_t^{-1}\circ\varphi_{t}\circ\varphi^{-1}(x)
-\psi\circ\psi_t^{-1}\circ\psi_t\circ g'\circ\varphi_t^{-1}\circ\varphi_{t}\circ\varphi^{-1}(x)|\\
&\leq
[\psi\circ\psi_t^{-1}]_{\mathrm{Lip}}
\|\psi_t\circ g\circ\varphi_t^{-1}
-\psi_t\circ g'\circ\varphi_t^{-1}\|_{C^0(\varphi_t(K\cap K_t),\mathbb{R}^d)}
\end{align}
\end{comment}
Next, by the Gluing property (II)
\begin{align}
&
[\psi\circ g\circ\varphi^{-1}-\psi\circ g'\circ\varphi^{-1}]_{\varphi(K)}\notag\\
&\leq
C\sum_{t\in T} 
[\psi\circ g\circ\varphi^{-1}-\psi\circ g'\circ\varphi^{-1}]_{\varphi(K\cap K_t)}
\end{align}
where $C$ depends only upon 
$K$ 
and the subcollection of
$\{K_t\}_{t\in T}$, 
consisting of sets intersecting $K$.
By 
the Composition property (I) and 
the Monotonicity property (III),
\begin{align}
&
[\psi\circ g\circ\varphi^{-1}
-\psi\circ g'\circ\varphi^{-1}]_{\varphi(K\cap K_t)}\notag\\
&\leq
\kappa
\|\psi_t\circ g\circ\varphi_t^{-1}
-\psi_t\circ g'\circ\varphi_t^{-1}\|_{\varphi_t(K\cap K_t)}
\\
&\leq
\kappa
\|\psi_t\circ g\circ\varphi_t^{-1}-\psi_t\circ g'\circ\varphi_t^{-1}\|_{\varphi_t(K_t)}
\end{align}
where $\kappa$ is a positive real number 
depending only upon the Lipschitz constants of 
$\psi\circ\psi_t^{-1}$ 
and 
$\varphi_t^{-1}\circ\varphi$.
Thus inequality (b) follows.
Finally (c) follows trivially from (a) and (b).
\end{proof}
As the strong basic sets~\eqref{def:strong_basic_nhd} form a base 
for the strong $\mathcal{F}$-Whitney topology, 
arbitrary open sets are unions of sets of this form. 
In fact, we can say more.
\begin{lemma}\label{lem:change_of_charts}
Let $\mathcal{U}$ be an open set in the strong $\mathcal{F}$-Whitney topology and $f\in \mathcal{U}$.
Let 
\begin{itemize}
\item
$\Phi=\{(U_t,\varphi_t)\}_{t\in T}$ be a locally finite collection of charts on $M$,
\item
$\Psi=\{(V_t,\psi_t)\}_{t\in T}$ be a collection of charts on $N$,
\item
$K=\{K_t\}_{t\in T}$ be a collection of compact subsets of $M$ covering $M$ 
such that 
$K_t\subset U_t$ and
$f(K_t)\subset V_t$, for all $t\in T$.
\end{itemize}
Then there exists a family of positive extended real numbers
$\epsilon=\{\epsilon_t\}_{t\in T}$
such that
\begin{equation}
\mathcal{W}=\mathcal{N}_s(f;\Phi,\Psi,K,\epsilon)
\end{equation}
is contained in $\mathcal{U}$.
\end{lemma}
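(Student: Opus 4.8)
The plan is to use the fact that $\mathcal{U}$ is open in the strong topology, so by definition of the base there is a strong basic neighbourhood $\mathcal{N}_s(f;\Phi',\Psi',K',\epsilon')$ with
\[
f\in\mathcal{N}_s(f;\Phi',\Psi',K',\epsilon')\subset\mathcal{U},
\]
where $\Phi'=\{(U_s',\varphi_s')\}_{s\in S}$, $\Psi'=\{(V_s',\psi_s')\}_{s\in S}$, $K'=\{K_s'\}_{s\in S}$, $\epsilon'=\{\epsilon_s'\}_{s\in S}$, with $S$ countable. The goal is then to choose the $\epsilon_t$ (for the \emph{given} data $\Phi,\Psi,K$) so small that membership in $\mathcal{W}=\mathcal{N}_s(f;\Phi,\Psi,K,\epsilon)$ forces membership in each of the constraints defining the smaller basic neighbourhood. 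This is exactly the kind of estimate supplied by Proposition~\ref{prop:change_of_charts-local}: for each fixed $s\in S$, apply that proposition with the single chart datum $(U_s',\varphi_s'),(V_s',\psi_s'),K_s'$ playing the role of $(U,\varphi),(V,\psi),K$, and with the collections $\{(U_t,\varphi_t)\}$, $\{(V_t,\psi_t)\}$, $\{K_t\}$ (those $K_t$ that meet $K_s'$) playing the role of the $\{(U_t,\varphi_t)\}_{t\in T}$ etc. Since $\{K_t\}$ covers $M$ it certainly covers $K_s'$, so the hypotheses of the proposition are met.

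Concretely, fix $s\in S$. Proposition~\ref{prop:change_of_charts-local}(c) gives a constant $C^{(s)}>0$, depending only on $K_s'$, the finite subcollection $\{K_t : K_t\cap K_s'\neq\emptyset\}$ and the associated charts, such that for all admissible $g$,
\[
\left\|\psi_s'\circ f\circ(\varphi_s')^{-1}-\psi_s'\circ g\circ(\varphi_s')^{-1}\right\|_{\varphi_s'(K_s')}
\leq
C^{(s)}\sum_{t: K_t\cap K_s'\neq\emptyset}
\left\|\psi_t\circ f\circ\varphi_t^{-1}-\psi_t\circ g\circ\varphi_t^{-1}\right\|_{\varphi_t(K_t)}.
\]
Note the sum on the right is finite: the cover $\Phi$ is locally finite, $K_s'$ is compact, hence only finitely many $K_t$ meet $K_s'$; call this number $n_s$. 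Now I want to define $\epsilon_t$ so that each summand is forced to be less than $\epsilon_s'/(C^{(s)} n_s)$ whenever the constraint at $t$ in $\mathcal{W}$ holds. Here is the one subtlety: a single index $t$ may meet several of the $K_s'$, so the constraint $\epsilon_t$ must simultaneously serve all of them. Since $\Phi'$ is also locally finite (being a chart cover indexed by a countable set with the local finiteness needed for a basic neighbourhood — and $K_t$ compact), only finitely many $K_s'$ meet a given $K_t$. Thus I set
\[
\epsilon_t \;=\; \min\left\{\, \frac{\epsilon_s'}{C^{(s)}\, n_s} \;:\; s\in S,\ K_t\cap K_s'\neq\emptyset \,\right\}\;>0,
\]
taking $\epsilon_t=+\infty$ (or any positive value) if no $K_s'$ meets $K_t$; this is a minimum over a finite nonempty set, hence a well-defined positive extended real.

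With this choice, suppose $g\in\mathcal{W}=\mathcal{N}_s(f;\Phi,\Psi,K,\epsilon)$. First one must check $g(K_s')\subset V_s'$: this follows because $g$ is $C^0$-close to $f$ on each $K_t$ meeting $K_s'$ — taking $\epsilon_t$ small also makes $\psi_t\circ g\circ\varphi_t^{-1}$ close to $\psi_t\circ f\circ\varphi_t^{-1}$, hence $g$ close to $f$ in the ambient sense on $K_s'$, and $f(K_s')\subset V_s'$ with $V_s'$ open; shrinking the $\epsilon_t$ further if necessary (another finite minimum) arranges this. Then, for each $s$, the displayed inequality from Proposition~\ref{prop:change_of_charts-local}(c) together with the bound $\|\psi_t\circ f\circ\varphi_t^{-1}-\psi_t\circ g\circ\varphi_t^{-1}\|_{\varphi_t(K_t)}<\epsilon_t\leq\epsilon_s'/(C^{(s)}n_s)$ for each of the $n_s$ relevant $t$ yields
\[
\left\|\psi_s'\circ f\circ(\varphi_s')^{-1}-\psi_s'\circ g\circ(\varphi_s')^{-1}\right\|_{\varphi_s'(K_s')}
< C^{(s)}\cdot n_s\cdot\frac{\epsilon_s'}{C^{(s)}n_s}=\epsilon_s'.
\]
Hence $g\in\mathcal{N}_s(f;\Phi',\Psi',K',\epsilon')\subset\mathcal{U}$, so $\mathcal{W}\subset\mathcal{U}$ as required.

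The main obstacle I anticipate is purely bookkeeping rather than analytic: organizing the double indexing so that a single $\epsilon_t$ handles all constraints $s$ with $K_s'\cap K_t\neq\emptyset$, and verifying that the relevant index sets are finite so that all the minima involved are genuine (and the constants $C^{(s)}$ well-defined). Both finiteness facts come from local finiteness of the chart collections combined with compactness of the $K_t$ and $K_s'$, i.e. from Remark~\ref{rmk:locally_finite_covers_are_countable} and standard point-set topology; the quantitative heart is entirely contained in Proposition~\ref{prop:change_of_charts-local}. The containment $g(K_s')\subset V_s'$ is a minor additional point handled by the same $C^0$-closeness estimate (Proposition~\ref{prop:change_of_charts-local}(a)).
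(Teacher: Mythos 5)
Your proposal is correct and follows essentially the same route as the paper's proof: pick a strong basic neighbourhood $\mathcal{N}_s(f;\Phi',\Psi',K',\epsilon')\subset\mathcal{U}$, apply Proposition~\ref{prop:change_of_charts-local}(c) for each $s\in S$ (using local finiteness of $\Phi$ and compactness of $K_s'$ to make the sum finite), and set $\epsilon_t$ to be a minimum over the finitely many $s$ with $K_s'\cap K_t\neq\emptyset$. In fact you are slightly more explicit than the paper on two points it glosses over, namely that local finiteness of $\Phi'$ makes that minimum finite (hence $\epsilon_t>0$) and that the containment $g(K_s')\subset V_s'$ must be secured by the $C^0$-closeness estimate before the norm inequality is even meaningful.
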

\begin{proof}
Since the sets~\eqref{def:strong_basic_nhd} form a base, 
there exists a basic set of the form
\begin{equation}
\mathcal{V}=\mathcal{N}_s(f;\Phi',\Psi',K',\epsilon')
\end{equation}
which is contained in $\mathcal{U}$.
Thus it suffices to show that $\mathcal{V}$ contains a set 
of the form $\mathcal{W}$ above.
It suffices to show that, 
for suitable $\epsilon=\{\epsilon_t\}_{t\in T}$, 
if $g$ satisfies
\begin{equation}
\|\psi_t\circ f\circ\varphi_t^{-1}-\psi_t\circ g\circ\varphi_t^{-1}\|_{\varphi_t(K_t)}<\epsilon_t
\end{equation}
for each $t\in T$, then
\begin{equation}\label{ineq:Phi'Psi'}
\|\psi'_s\circ f\circ(\varphi_s')^{-1}-\psi'_s\circ g\circ(\varphi_s')^{-1}\|_{\varphi_s'(K_s')}<\epsilon_s' \ .
\end{equation}
Fix $s\in S$.
Observe that, as $\Phi$ is locally finite, 
the set $K_s'$ is covered by at most finitely many $K_t$.
Consequently, 
by Proposition~\ref{prop:change_of_charts-local}(c),
there exists 
positive $\delta_s$ 
such that, 
for all $t\in T$ with $K_t$ intersecting $K$,
if
\begin{equation}
\|\psi_t\circ f\circ\varphi_t^{-1}-\psi_t\circ g\circ\varphi_t^{-1}\|_{\varphi_t(K_t)}<\delta_s
\end{equation}
then inequality~\eqref{ineq:Phi'Psi'} above is satisfied.
For $t\in T$, 
take 
$\epsilon_t=\min_{s\in S: K_s'\cap K_t\neq \emptyset}\delta_s$.
\end{proof}
%
\section{Main Construction}
%
\subsection{Closed sub-basic sets.}\label{sect:subbasic_sets}
Throughout the rest of this article
we will use the notation 
$\overline{\mathcal{N}}\left(f;(U,\varphi),(V,\psi),K,\epsilon\right)$
to denote the set of maps 
$g\in C^\mathcal{F}(M,N)$, 
such that $g(K)\subset V$, 
and inequality~\eqref{ineq:N-ball1} above is satisfied with $<$ replaced by $\leq$.
\begin{proposition}\label{prop:barN-closed}
Provided that 
\begin{equation}
\epsilon<\mathrm{dist}\left(\psi\circ f(K),\mathrm{bd}(\psi(V))\right) \ ,
\end{equation}
each of the sets 
$\overline{\mathcal{N}}\left(f;(U,\varphi),(V,\psi),K,\epsilon\right)$
is closed in both the weak and strong $\mathcal{F}$-Whitney topologies, 
with interior equal to
$\mathcal{N}\left(f;(U,\varphi),(V,\psi),K,\epsilon\right)$.
\end{proposition}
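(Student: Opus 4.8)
The plan is to establish three things: (A) the set $\overline{\mathcal N}:=\overline{\mathcal N}(f;(U,\varphi),(V,\psi),K,\epsilon)$ is closed in the weak $\mathcal F$-Whitney topology (hence, that topology being coarser, also in the strong one); (B) its interior in the strong topology is contained in $\mathcal N:=\mathcal N(f;(U,\varphi),(V,\psi),K,\epsilon)$; and (C) the trivial fact that $\mathcal N$ is contained in the interior of $\overline{\mathcal N}$ in either topology, since $\mathcal N$ is a weak sub-basic set, hence weakly open, and $\mathcal N\subset\overline{\mathcal N}$. As the weak interior is contained in the strong interior, (B) and (C) force $\mathrm{int}_{\mathrm{weak}}(\overline{\mathcal N})=\mathrm{int}_{\mathrm{strong}}(\overline{\mathcal N})=\mathcal N$, which with (A) is the assertion. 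The geometric content of the hypothesis, used repeatedly, is that the closed set $L:=\{z\in\mathbb R^d : \mathrm{dist}(z,\psi\circ f(K))\le\epsilon\}$ is a \emph{compact} subset of the open set $\psi(V)$; setting $V_0:=\psi^{-1}(L)$, a compact subset of $V$, and using that $\|\cdot\|_K$ dominates the sup-norm, one sees that every $h\in\overline{\mathcal N}$ satisfies $h(K)\subset V_0$. In particular $\psi\circ h\circ\varphi^{-1}$ then lies in the Banach space $C^{\mathcal F}(\varphi(K),\mathbb R^d)$, so $F(h):=\|\psi\circ f\circ\varphi^{-1}-\psi\circ h\circ\varphi^{-1}\|_{\varphi(K)}$ is finite and well defined whenever $h(K)\subset V$, and there $\overline{\mathcal N}=\{h : h(K)\subset V,\ F(h)\le\epsilon\}$ while $\mathcal N=\{h : h(K)\subset V,\ F(h)<\epsilon\}$.

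For (A) I would show that the complement of $\overline{\mathcal N}$ is weakly open. Let $g\notin\overline{\mathcal N}$. If $g(K)\not\subset V$, choose $x_0\in K$ with $g(x_0)\notin V$, so also $g(x_0)\notin V_0$; since $V_0$ is compact and $N$ a manifold, there is a chart $(V'',\psi'')$ about $g(x_0)$ with $V''\cap V_0=\emptyset$, and then the weak sub-basic set $\mathcal N(g;(U,\varphi),(V'',\psi''),\{x_0\},\infty)=\{h : h(x_0)\in V''\}$ is a weak neighbourhood of $g$ disjoint from $\overline{\mathcal N}$. (This is one place the hypothesis enters: it is the compact set $V_0$, not $V$ itself, that can be separated from the point $g(x_0)$, which may lie on $\mathrm{bd}(V)$.) If instead $g(K)\subset V$ and $F(g)=\epsilon+2\eta$ with $\eta>0$, then by the reverse triangle inequality in $C^{\mathcal F}(\varphi(K),\mathbb R^d)$ every $h\in\mathcal N(g;(U,\varphi),(V,\psi),K,\eta)$ has $F(h)\ge F(g)-\|\psi\circ g\circ\varphi^{-1}-\psi\circ h\circ\varphi^{-1}\|_{\varphi(K)}>\epsilon$; this weak neighbourhood of $g$ again misses $\overline{\mathcal N}$. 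Hence $\overline{\mathcal N}$ is weakly, and a fortiori strongly, closed.

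For (B), let $g\in\overline{\mathcal N}\setminus\mathcal N$; then $g(K)\subset V$ and, since $g\notin\mathcal N$, necessarily $F(g)=\epsilon$. The idea is to push $g$ ``radially away from $f$'' near $K$. Write $w:=\psi\circ g\circ\varphi^{-1}-\psi\circ f\circ\varphi^{-1}$, and note $\|w\|_{C^0(\varphi(K),\mathbb R^d)}\le F(g)=\epsilon<\mathrm{dist}(\psi\circ f(K),\mathrm{bd}(\psi(V)))$. By continuity and compactness of $K$ one may fix an open set $W$ with $K\subset W$ and $\overline W$ a compact subset of $U$ satisfying $f(\overline W),g(\overline W)\subset V$ (so $w$ is defined and of finite $\mathcal F$-norm on $\varphi(\overline W)$) which is moreover small enough that $|w(y)|<\mathrm{dist}\!\bigl(\psi\circ f\circ\varphi^{-1}(y),\mathrm{bd}(\psi(V))\bigr)$ for all $y\in\varphi(\overline W)$; fix also a smooth $\chi\colon M\to[0,1]$ with $\chi\equiv1$ on a neighbourhood of $K$ and $\mathrm{supp}\,\chi$ a compact subset of $W$. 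For $s>1$ sufficiently close to $1$, define $g_s:=g$ off $\mathrm{supp}\,\chi$ and, on $W$, by $\psi\circ g_s\circ\varphi^{-1}:=\psi\circ f\circ\varphi^{-1}+\bigl(1+(s-1)(\chi\circ\varphi^{-1})\bigr)\,w$; the choice of $W$ keeps the right-hand side in $\psi(V)$, so $g_s\colon M\to N$ is well defined and continuous, and on $\varphi(K)$ (where $\chi\equiv1$) it equals $\psi\circ f\circ\varphi^{-1}+s\,w$. Homogeneity of $\|\cdot\|_{\varphi(K)}$ then gives $F(g_s)=s\,\|w\|_{\varphi(K)}=s\epsilon>\epsilon$, so $g_s\notin\overline{\mathcal N}$. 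Finally, as $s\to1^+$ with $W$ and $\chi$ held fixed, $\psi\circ g_s\circ\varphi^{-1}-\psi\circ g\circ\varphi^{-1}=(s-1)(\chi\circ\varphi^{-1})\,w\to0$ in $\|\cdot\|_{\varphi(\overline W)}$; since $\mathrm{supp}\,\chi$ is compact it meets only finitely many charts of any locally finite chart family on $M$, and on the remaining charts $g_s$ coincides with $g$, so by the composition property~(I) (together with (II), (III)) applied to the relevant transition maps, $\|\psi_t\circ g_s\circ\varphi_t^{-1}-\psi_t\circ g\circ\varphi_t^{-1}\|_{\varphi_t(K_t)}\to0$ for every relevant $t$. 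Thus $g_s$ enters any prescribed strong basic neighbourhood of $g$ once $s$ is close enough to $1$, and $g$ is not in the strong interior of $\overline{\mathcal N}$.

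I expect the main obstacle to be the construction in (B): one must check that the deformed map $g_s$ genuinely belongs to $C^{\mathcal F}(M,N)$ and that it depends continuously on the parameter $s$ in the strong topology. This rests on properties~(I)--(III) of the seminorm family, together with the (case-specific, but routine) stability of the classes $C^{\mathcal F}$ under multiplication by smooth cut-off functions — transparent in the H\"older and Sobolev settings. The remaining ingredients — the triangle-inequality estimates in (A) and the elementary set-theoretic bookkeeping — are straightforward.
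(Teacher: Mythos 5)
The paper states this proposition without giving a proof, so there is no argument of the authors to compare yours against; judged on its own terms, your part (A) and the easy inclusion (C) are correct and complete. The key observation that the hypothesis on $\epsilon$ makes the closed $\epsilon$-neighbourhood $L$ of $\psi\circ f(K)$ a \emph{compact} subset of $\psi(V)$, so that every $h\in\overline{\mathcal N}$ maps $K$ into the compact set $V_0=\psi^{-1}(L)$, is exactly what is needed to separate a map with $g(K)\not\subset V$ from $\overline{\mathcal N}$ by a weak sub-basic set (your use of $\epsilon=\infty$ is legitimate, since the paper allows extended real $\epsilon$), and the reverse-triangle-inequality case is routine. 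Since the strong topology refines the weak one, weak closedness gives strong closedness, and $\mathcal N\subset\mathrm{int}_{\mathrm{weak}}(\overline{\mathcal N})\subset\mathrm{int}_{\mathrm{strong}}(\overline{\mathcal N})$, so the whole burden falls on (B).

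The gap is in (B), and you partly flag it yourself: the deformed map $g_s$, defined by $\psi\circ g_s\circ\varphi^{-1}=\psi\circ f\circ\varphi^{-1}+\bigl(1+(s-1)\chi\circ\varphi^{-1}\bigr)w$, need not lie in $C^{\mathcal F}(M,N)$, and the quantity $\|(s-1)(\chi\circ\varphi^{-1})w\|_{\varphi(K)}$ need not even be finite, because stability of the seminorms under multiplication by smooth cut-off functions is \emph{not} among properties (I)--(III) and does not follow from them: property (I) only controls pre- and post-composition with smooth diffeomorphisms, and multiplication by $\chi$ cannot be reduced to that within the fixed target dimension $d$. A concrete admissible family shows the construction can leave the class: set $[g]_K=0$ if $g$ is locally constant on $K$ and $[g]_K=\infty$ otherwise; this satisfies (I)--(III) (local constancy is preserved by composition with diffeomorphisms, by finite unions, and by restriction), but then $C^{\mathcal F}(M,N)$ consists of locally constant maps and your $g_s$ is not in it whenever $w\neq0$ and $\chi$ is non-constant. (In that example the conclusion of the proposition still holds --- one perturbs instead by moving the constant, e.g.\ post-composing $g$ with a small diffeomorphism of $N$, an operation that (I) does control --- so the statement is not contradicted, but your argument does not prove it from (I)--(III) alone.) As written, then, the equality $\mathrm{int}(\overline{\mathcal N})=\mathcal N$ is established only under the extra, case-specific hypothesis you invoke (a Leibniz-type bound $\|\chi w\|_{\varphi(K)}\leq C(\chi)\|w\|_{\varphi(K)}$, which indeed holds in the H\"older, Lipschitz and Sobolev examples of the paper); to prove the proposition in the paper's abstract setting you must either add such a cut-off property to the standing assumptions or replace the deformation by one built from operations licensed by (I)--(III). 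A secondary, minor omission: you should also note why $g_s(K'_t)\subset V'_t$ for the finitely many charts of the prescribed strong neighbourhood meeting $\mathrm{supp}\,\chi$ (uniform closeness of $g_s$ to $g$ on that compact set), but this is routine.
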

\begin{definition}\label{def:special_sub-basic_nhd}
Take $f\in C^\mathcal{F}(M,N)$.
A {\it special sub-basic neighbourhood about $f$} 
is any non-empty set of the form
\begin{equation}\label{eq:V-basic'}
\mathcal{B}
=
\bigcap_{t\in T} 
\overline{\mathcal{N}}(f;(U_t,\varphi_t),(V_t,\psi_t),K_t,\epsilon_t)
\end{equation}
where 
\begin{enumerate}
\item[(i)]
$T$ is a countable index set,
\item[(ii)]\label{property:locally_finite}
$\{(U_t,\varphi_t)\}_{t\in T}$ is a locally finite collection of charts on $M$,
\item[(iii)]
$\{K_t\}_{t\in T}$ is collection of compact sets whose interiors cover $M$,
\item[(iv)]\label{eq:compact_away_from_bd}
$\epsilon_t<\mathrm{dist}(\psi_t\circ f(K_t),\mathrm{bd}\psi_t(V_t))$, for all $t\in T$,
\item[(v)]\label{property:totally_bounded}
$\sup_{t\in T} \epsilon_t<\infty \ $.
\end{enumerate}
\end{definition}
\begin{lemma}\label{lem:opens_contain_specials}
For each non-empty countable intersection of weak sub-basic sets
\begin{equation}
\mathcal{B}_0
=
\bigcap_{t\in T_0} 
\mathcal{N}(f_t;(U_t,\varphi_t),(V_t,\psi_t),K_t,\epsilon_t)
\end{equation}
whose collection of charts $\{(U_t,\varphi_t)\}_{t\in T_0}$ is locally finite, and for 
each $f\in\mathcal{B}_0$,
there exists a special sub-basic neighbourhood 
$\mathcal{B}$ about $f$ which is contained in $\mathcal{B}_0$. 
\end{lemma}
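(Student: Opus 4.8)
The plan is to start from the given countable intersection $\mathcal{B}_0 = \bigcap_{t\in T_0} \mathcal{N}(f_t;(U_t,\varphi_t),(V_t,\psi_t),K_t,\epsilon_t)$ and a point $f\in\mathcal{B}_0$, and to manufacture a special sub-basic neighbourhood about $f$ meeting all five defining conditions of Definition~\ref{def:special_sub-basic_nhd}. First I would use the openness of $\mathcal{B}_0$ (each $\mathcal{N}(\cdot)$ is open) together with Lemma~\ref{lem:change_of_charts}: we may replace the given data by a \emph{new} locally finite family of charts $\Phi=\{(U_t,\varphi_t)\}_{t\in T}$ on $M$, charts $\Psi=\{(V_t,\psi_t)\}_{t\in T}$ on $N$, and compacts $K=\{K_t\}_{t\in T}$ with $K_t\subset U_t$ and $f(K_t)\subset V_t$, whose interiors $\interior{K}_t$ cover $M$ (invoking Remark~\ref{rmk:cpt_cover_with_covering_interiors} to arrange the covering-interiors property), and a family $\epsilon=\{\epsilon_t\}_{t\in T}$ of positive reals such that $\mathcal{N}_s(f;\Phi,\Psi,K,\epsilon)\subset\mathcal{B}_0$. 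Since the chart family is locally finite and $M$ is second countable, $T$ is countable by Remark~\ref{rmk:locally_finite_covers_are_countable}, giving condition~(i). Conditions~(ii) and~(iii) are then immediate from the construction.

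The remaining work is to adjust the $\epsilon_t$ so that the \emph{closed} sub-basic sets $\overline{\mathcal{N}}(f;(U_t,\varphi_t),(V_t,\psi_t),K_t,\epsilon_t)$ are well-behaved and their intersection still sits inside $\mathcal{B}_0$. For condition~(iv) I would shrink each $\epsilon_t$, if necessary, to be strictly less than $\mathrm{dist}(\psi_t\circ f(K_t),\mathrm{bd}\,\psi_t(V_t))$; this quantity is positive because $f(K_t)\subset V_t$ with $K_t$ compact and $\psi_t(V_t)$ open, and by Proposition~\ref{prop:barN-closed} each such closed set is then genuinely closed with interior the corresponding open sub-basic set. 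For condition~(v), I would further require $\epsilon_t<\min\{\epsilon_t,\, 1\}$ — or more simply replace each $\epsilon_t$ by $\tfrac12\min\{\epsilon_t,\,\mathrm{dist}(\psi_t\circ f(K_t),\mathrm{bd}\,\psi_t(V_t)),\,1\}$ in one stroke — which forces $\sup_{t\in T}\epsilon_t\le 1<\infty$. After this replacement we still have $\overline{\mathcal{N}}(f;(U_t,\varphi_t),(V_t,\psi_t),K_t,\epsilon_t)\subset\mathcal{N}(f;(U_t,\varphi_t),(V_t,\psi_t),K_t,\epsilon_t')$ with the original $\epsilon_t'$, so the intersection over $t$ is contained in $\mathcal{N}_s(f;\Phi,\Psi,K,\epsilon')\subset\mathcal{B}_0$; one should also note $f$ itself lies in every $\overline{\mathcal{N}}(\cdots)$ so $\mathcal{B}$ is non-empty. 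This yields the desired $\mathcal{B}=\bigcap_{t\in T}\overline{\mathcal{N}}(f;(U_t,\varphi_t),(V_t,\psi_t),K_t,\epsilon_t)\subset\mathcal{B}_0$.

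The step I expect to be the main obstacle is the passage from the \emph{given} chart data $\{(U_t,\varphi_t)\}_{t\in T_0}$ (which is only assumed locally finite, with compacts $K_t$ that need not cover $M$ nor have covering interiors) to the \emph{new} data satisfying all of~(ii), (iii) — in particular getting a locally finite refinement whose compact pieces have interiors covering $M$ while preserving containment in $\mathcal{B}_0$. This is exactly what Lemma~\ref{lem:change_of_charts} together with Remark~\ref{rmk:cpt_cover_with_covering_interiors} is designed to handle: one first extends $\{(U_t,\varphi_t)\}_{t\in T_0}$ to a locally finite atlas of $M$ with the covering-interiors property (using paracompactness/second countability of $M$, and the fact that $f(K_t)\subset V_t$ lets us also pick target charts on the new pieces, e.g. taking $K_t$ small and $V_t$ a chart around $f(K_t)$), and then applies Lemma~\ref{lem:change_of_charts} to $\mathcal{U}=\mathcal{B}_0$ to obtain compatible $\epsilon_t$. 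Once the correct covering is in place, the $\epsilon$-bookkeeping in the previous paragraph is routine, resting only on Proposition~\ref{prop:barN-closed} and the triangle-inequality-type containments already recorded.
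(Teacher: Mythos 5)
Your construction reaches the right conclusion, but it takes a genuinely different route from the paper. The paper's proof never invokes Lemma~\ref{lem:change_of_charts}: it keeps the original data indexed by $T_0$, enlarges each $K_t$ to a compact $K_t'\subset U_t$ with non-empty interior, re-centres at $f$ by a triangle-inequality choice of $\epsilon_t'<\epsilon_t$ so that $\overline{\mathcal{N}}(f;(U_t,\varphi_t),(V_t,\psi_t),K_t',\epsilon_t')\subset\mathcal{N}(f_t;(U_t,\varphi_t),(V_t,\psi_t),K_t,\epsilon_t)$, and then \emph{adds} a further countable collection of closed sub-basic sets about $f$ (keeping the total chart family locally finite) to achieve the covering-interiors condition (iii), finally shrinking the $\epsilon$'s to get (iv) and (v). You instead discard the original charts altogether, build a fresh locally finite atlas adapted to $f$ whose compacts have covering interiors, and obtain admissible $\epsilon_t$ from Lemma~\ref{lem:change_of_charts}. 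That is a legitimate alternative with cleaner bookkeeping at the end, but it leans on the heavier change-of-charts machinery (Proposition~\ref{prop:change_of_charts-local}) where the paper only needs the triangle inequality and the monotonicity of sub-basic sets under enlarging compacts.

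There is one point you must repair for your route to be rigorous: Lemma~\ref{lem:change_of_charts} applies to a set that is \emph{open in the strong $\mathcal{F}$-Whitney topology}, and your justification that $\mathcal{B}_0$ is open --- ``each $\mathcal{N}(\cdot)$ is open'' --- does not give this, since $T_0$ may be countably infinite and an infinite intersection of open sets need not be open in either Whitney topology. What saves the argument is precisely the hypothesis that $\{(U_t,\varphi_t)\}_{t\in T_0}$ is locally finite: setting $\delta_t=\epsilon_t-\|\psi_t\circ f_t\circ\varphi_t^{-1}-\psi_t\circ f\circ\varphi_t^{-1}\|_{\varphi_t(K_t)}>0$, the set $\mathcal{N}_s(f;\Phi_0,\Psi_0,K_0,\delta)$ (with $\Phi_0,\Psi_0,K_0$ the given data) is a legitimate strong basic neighbourhood of $f$ contained in $\mathcal{B}_0$ by the triangle inequality; you should either record this to conclude that $\mathcal{B}_0$ is strong-open, or simply apply Lemma~\ref{lem:change_of_charts} to this basic neighbourhood in place of $\mathcal{B}_0$. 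Note that this missing step is exactly the re-centering argument that constitutes the core of the paper's own proof. Finally, the clause ``require $\epsilon_t<\min\{\epsilon_t,1\}$'' is self-referential as written, but your subsequent replacement of $\epsilon_t$ by $\tfrac12\min\{\epsilon_t,\,\mathrm{dist}(\psi_t\circ f(K_t),\mathrm{bd}\,\psi_t(V_t)),\,1\}$ handles (iv) and (v) correctly.
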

\begin{proof}
Firstly,
for each $t\in T_0$, 
choose a compact subset $K_t'$ of $M$
with non-empty interior such that 
$K_t\subset K_t'\subset U_t$, 
and
$f_t(K_t')\subset V_t$.
By Remark~\ref{rmk:cpt_cover_with_covering_interiors},
\begin{equation}
\mathcal{N}(f_t;(U_t,\varphi_t),(V_t,\psi_t),K_t',\epsilon_t)
\subset 
\mathcal{N}(f_t;(U_t,\varphi_t),(V_t,\psi_t),K_t,\epsilon_t) \ .
\end{equation}
For each $t\in T_0$,
shrinking $K_t'$ slightly if necessary, we may assume that
$f$ lies in 
$\mathcal{N}(f_t;(U_t,\varphi_t),(V_t,\psi_t),K_t',\epsilon_t)$.
Thus
there exists a positive 
$\epsilon_t'<\epsilon_t$ 
so that
\begin{equation}
\overline{\mathcal{N}}(f;(U_t,\varphi_t),(V_t,\psi_t),K_t',\epsilon_t')
\subset 
\mathcal{N}(f_t;(U_t,\varphi_t),(V_t,\psi_t),K_t,\epsilon_t) \ .
\end{equation}
Shrinking $\epsilon_t'$ if necessary we may assume further that inequality (iv) is satisfied.
Consequently
\begin{equation}
\bigcap_{t\in T_0}\overline{\mathcal{N}}(f;(U_t,\varphi_t),(V_t,\psi_t),K_t',\epsilon_t')
\subset 
\bigcap_{t\in T_0}\mathcal{N}(f_t;(U_t,\varphi_t),(V_t,\psi_t),K_t,\epsilon_t) \ .
\end{equation}
The collection of closed sub-basic sets satisfy (i), (ii) and (iv).
We may add to this collection a countable collection of closed 
sub-basic sets so that it also satisfies (i)--(iv). 
Decreasing each $\epsilon_t$ is necessary,
we may assume that they also satisfy (v).
This gives the result.
\begin{comment}
More precisely, we add to the collection
Next, we add to the collection of closed sub-basic sets
\begin{equation}
\overline{\mathcal{N}}(f;(U_t,\varphi_t),(V_t,\psi_t),K_t',\epsilon_t'), \qquad t\in T_0
\end{equation}
a collection
(for some countable index set $T_1$),
\begin{equation}
\overline{\mathcal{N}}(f;(U_t,\varphi_t),(V_t,\psi_t),K_t',\epsilon_t'), \qquad t\in T_1
\end{equation}
where 
$\{(U_t,\varphi_t)\}_{t\in T_1}$ is a collection of charts on $M$,
$\{(V_t,\psi_t)\}_{t\in T_1}$ is a collection of charts on $N$,
$\{K_t'\}_{t\in T_1}$ is a finite collection of compact sets so that 
$\{\interior{K}_t\}_{t\in T_0\cup T_1}$ 
covers $M$,
$K_t'\subset U_t$, $f(K_t')\subset V_t$, 
and 
$\{\epsilon_t'\}_{t\in T_1}$ satisfy inequality~\eqref{eq:compact_away_from_bd} above, for each $t\in T_1$.
Since $f$ lies in each of these sub-basic sets it lies in their intersection, which therefore is non-empty. 
Thus the result follows by setting $T=T_0\cup T_1$.
\end{comment}
%
\end{proof}
\noindent
Let $f\in C^\mathcal{F}(M,N)$ and
take a special sub-basic neighbourhood $\mathcal{B}$ about $f$
as given by Definition~\ref{def:special_sub-basic_nhd} above.
For each $t\in T$,
$C^\mathcal{F}(\varphi_t(K_t),\mathbb{R}^d)$ is a Banach space.
Therefore the sum
\begin{equation}
\mathcal{Z}
=
\left\{
(g_t)_{t\in T}\in \bigoplus_{t\in T} C^\mathcal{F}(\varphi_t(K_t),\mathbb{R}^d) 
\ : \ 
\sup_{t\in T} \|g_t\|_{\varphi_t(K_t)}<\infty
\right\}
\end{equation}
is also Banach space when endowed with the norm
\begin{equation}
\|\cdot\|_{\mathcal{Z}}
=
\sup_{t\in T} \|\cdot\|_{\varphi_t(K_t)} \ .
\end{equation}
Consider the map
\begin{equation}
\iota\colon \mathcal{B}\to \mathcal{Z} \ , 
\qquad 
g\mapsto \bigl(\psi_t\circ f\circ \varphi_t^{-1}-\psi_t\circ g\circ\varphi_t^{-1}\bigr)_{t\in T} \ .
\end{equation}
By property (v) 
in Definition~\ref{def:special_sub-basic_nhd},
this map is well-defined.
Denote by $\rho$ the pullback via $\iota$ of the distance induced by the norm $\|\cdot\|_\mathcal{Z}$.
By Property (iii) of Definition~\ref{def:special_sub-basic_nhd},
the sets $\{K_t\}_{t\in T}$ cover $M$, 
and so
$\iota$ is an injection.
Hence $\rho$ defines a metric (rather than just a semi-metric).
\begin{lemma}\label{lem:complete_metric}
If 
$\mathcal{B}$ is a special sub-basic neighbourhood 
then the metric
$\rho$ is complete.
\end{lemma}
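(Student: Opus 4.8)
The plan is to exhibit $(\mathcal{B},\rho)$ as an isometric copy of a \emph{closed} subset of the Banach space $(\mathcal{Z},\|\cdot\|_{\mathcal{Z}})$, so that completeness follows because closed subsets of complete metric spaces are complete. Since $\rho$ is by construction the pullback under $\iota$ of the norm distance of $\mathcal{Z}$, and $\iota$ is injective (as already observed), $\iota$ is an isometric embedding; hence $\rho$ is complete precisely when $\iota(\mathcal{B})$ is closed in $\mathcal{Z}$, and it is this closedness that I will establish.

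Accordingly, I will take a sequence $(g^{(n)})_{n}$ in $\mathcal{B}$ with $\iota(g^{(n)})\to\zeta=(\zeta_t)_{t\in T}$ in $\mathcal{Z}$ and manufacture $g\in\mathcal{B}$ with $\iota(g)=\zeta$. For each $t$ I put $h_t:=\psi_t\circ f\circ\varphi_t^{-1}-\zeta_t$ on $\varphi_t(K_t)$. Since $f\in C^\mathcal{F}(M,N)$ and $\zeta_t\in C^\mathcal{F}(\varphi_t(K_t),\mathbb{R}^d)$ we have $h_t\in C^\mathcal{F}(\varphi_t(K_t),\mathbb{R}^d)$, and componentwise convergence in $\mathcal{Z}$ gives $\psi_t\circ g^{(n)}\circ\varphi_t^{-1}\to h_t$ in $\|\cdot\|_{\varphi_t(K_t)}$, in particular uniformly on $\varphi_t(K_t)$. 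Letting $n\to\infty$ in the inequality $\|\psi_t\circ f\circ\varphi_t^{-1}-\psi_t\circ g^{(n)}\circ\varphi_t^{-1}\|_{\varphi_t(K_t)}\le\epsilon_t$ gives $\|\zeta_t\|_{\varphi_t(K_t)}\le\epsilon_t$, and as $\epsilon_t<\mathrm{dist}(\psi_t\circ f(K_t),\mathrm{bd}\,\psi_t(V_t))$ by Definition~\ref{def:special_sub-basic_nhd}(iv), the image $h_t(\varphi_t(K_t))$ lies inside $\psi_t(V_t)$; hence $\psi_t^{-1}\circ h_t\circ\varphi_t$ is a well-defined continuous map on $K_t$.

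The heart of the matter will be gluing these local maps into one map $g\colon M\to N$. For $x\in K_s\cap K_t$ the points $g^{(n)}(x)$ lie in $V_s\cap V_t$ (by the defining condition of $\overline{\mathcal{N}}$); since $(\psi_s\circ g^{(n)}\circ\varphi_s^{-1})(\varphi_s(x))\to h_s(\varphi_s(x))\in\psi_s(V_s)$, applying the continuous map $\psi_s^{-1}$ gives $g^{(n)}(x)\to\psi_s^{-1}(h_s(\varphi_s(x)))$, and likewise $g^{(n)}(x)\to\psi_t^{-1}(h_t(\varphi_t(x)))$, so these two values coincide because $N$ is Hausdorff. This produces a well-defined $g$ with $\psi_t\circ g\circ\varphi_t^{-1}=h_t$ on $\varphi_t(K_t)$ for every $t$, and $g$ is continuous because on each interior $\interior{K}_t$ (these cover $M$ by Definition~\ref{def:special_sub-basic_nhd}(iii)) it coincides with the composition $\psi_t^{-1}\circ h_t\circ\varphi_t$. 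That $g\in C^\mathcal{F}(M,N)$ will then follow, exactly as in the proof of Proposition~\ref{prop:change_of_charts-local}, from the facts that $g$ reads over each $K_t$ in charts as $h_t\in C^\mathcal{F}(\varphi_t(K_t),\mathbb{R}^d)$, that any compact subset of a chart of $M$ meets only finitely many $K_t$ by local finiteness, and that properties (I)--(III) propagate $\mathcal{F}$-finiteness across such finite unions. Finally $g\in\mathcal{B}$ since, for each $t$, $\|\psi_t\circ f\circ\varphi_t^{-1}-\psi_t\circ g\circ\varphi_t^{-1}\|_{\varphi_t(K_t)}=\|\zeta_t\|_{\varphi_t(K_t)}\le\epsilon_t$ and $g(K_t)\subset V_t$, whence $\iota(g)=(\psi_t\circ f\circ\varphi_t^{-1}-h_t)_{t\in T}=\zeta$.

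The main obstacle I anticipate is precisely this gluing: producing from the componentwise limits $(h_t)_{t\in T}$ a single map $g$ that is at once well-defined, continuous, of class $C^\mathcal{F}$, and a member of $\mathcal{B}$. Overlap compatibility is forced by the Hausdorff property of $N$ together with the uniform convergence on each $\varphi_t(K_t)$; the $C^\mathcal{F}$-regularity of $g$ is where local finiteness and properties (I)--(III) enter; and the size restriction (iv) on the $\epsilon_t$ is exactly what keeps every $\psi_t^{-1}\circ h_t$ meaningful. The reduction to closedness of $\iota(\mathcal{B})$ and the limiting argument in the norm inequalities are routine.
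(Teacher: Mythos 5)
Your proposal is correct and takes essentially the same route as the paper: pull completeness of $\rho$ back to closedness of $\iota(\mathcal{B})$ in the Banach space $\mathcal{Z}$, glue the componentwise limits $h_t$ into a single map $g$ using local finiteness and the covering of $M$ by the interiors of the $K_t$, verify $g\in C^\mathcal{F}(M,N)$ via properties (I)--(III), and conclude since a closed subset of a complete space is complete. The only (harmless) deviations are that you settle overlap compatibility by pointwise limits in the Hausdorff manifold $N$ where the paper instead runs a norm estimate using (I) and (III), and that you spell out two points the paper leaves implicit, namely that condition (iv) keeps $h_t(\varphi_t(K_t))$ inside $\psi_t(V_t)$ and that the glued limit $g$ actually satisfies the defining inequalities of $\mathcal{B}$, so that $\iota(g)=\zeta$.
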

\begin{remark}
The metric $\rho$ is, in general, not compatible with the 
either the induced weak or induced strong $\mathcal{F}$-Whitney topologies. 
\end{remark}
\begin{proof}
Property (iv), together with Proposition~\ref{prop:barN-closed}, 
implies that, for each $t\in T$, 
$\overline{\mathcal{N}}(f;(U_t,\varphi_t),(V_t,\psi_t),K_t,\epsilon_t)$
is a closed subset.
Thus, 
$\mathcal{B}$ is closed.
\begin{claim}
The image $\iota(\mathcal{B})$ is closed in $\mathcal{Z}$.
\end{claim}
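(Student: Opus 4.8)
The plan is to show $\iota(\mathcal{B})$ coincides with an intersection of closed sets in $\mathcal{Z}$, each cut out by a continuous real-valued function. First I would observe that $\mathcal{Z}$ is a Banach space (as noted just above the Lemma), so for each fixed $t\in T$ the map $\pi_t\colon\mathcal{Z}\to C^{\mathcal{F}}(\varphi_t(K_t),\mathbb{R}^d)$ sending $(g_s)_{s\in T}\mapsto g_t$ is continuous (it is $1$-Lipschitz for $\|\cdot\|_{\mathcal{Z}}$), and hence so is the composition with the $\mathcal{F}$-norm, i.e. $(g_s)_{s\in T}\mapsto\|g_t\|_{\varphi_t(K_t)}$. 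Consequently the ``slab''
\begin{equation}
\mathcal{Z}_t=\left\{(g_s)_{s\in T}\in\mathcal{Z}\ :\ \|g_t\|_{\varphi_t(K_t)}\leq\epsilon_t\right\}
\end{equation}
is closed in $\mathcal{Z}$, and therefore so is $\mathcal{Z}_\infty=\bigcap_{t\in T}\mathcal{Z}_t$.

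Next I would identify $\iota(\mathcal{B})$ inside $\mathcal{Z}_\infty$. By definition of $\mathcal{B}$ and of $\iota$, a point $g\in\mathcal{B}$ is sent to a tuple whose $t$-th coordinate is $\psi_t\circ f\circ\varphi_t^{-1}-\psi_t\circ g\circ\varphi_t^{-1}$, and the condition $g\in\overline{\mathcal{N}}(f;(U_t,\varphi_t),(V_t,\psi_t),K_t,\epsilon_t)$ is exactly $\|\psi_t\circ f\circ\varphi_t^{-1}-\psi_t\circ g\circ\varphi_t^{-1}\|_{\varphi_t(K_t)}\leq\epsilon_t$, so indeed $\iota(\mathcal{B})\subset\mathcal{Z}_\infty$. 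The point is that $\iota(\mathcal{B})$ is precisely the subset of $\mathcal{Z}_\infty$ consisting of those tuples that are \emph{globally consistent}, i.e. arise from a single globally defined map $g\in C^{\mathcal{F}}(M,N)$ via $g\mapsto(\psi_t\circ f\circ\varphi_t^{-1}-\psi_t\circ g\circ\varphi_t^{-1})_t$. So it remains to show that this consistency condition is closed.

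For consistency I would use the overlap maps. Given $t,t'\in T$ with $U_t\cap U_{t'}\neq\emptyset$ and a tuple $(g_s)_{s\in T}$ in the image, the components $g_t$ and $g_{t'}$ are not independent: on $\varphi_t(K_t\cap K_{t'})$ they are related by the transition diffeomorphisms $\varphi_{t'}\circ\varphi_t^{-1}$ and $\psi_{t'}\circ\psi_t^{-1}$ (and the fixed reference map $f$), via an explicit affine-in-$g$ compatibility identity. Each such identity defines a closed condition on $\mathcal{Z}$: the two sides, as maps into $C^0(\varphi_t(K_t\cap K_{t'}),\mathbb{R}^d)$, depend continuously on $(g_s)_{s\in T}$ — here one invokes the Composition property (I) together with the Monotonicity property (III), exactly as in the proof of Proposition~\ref{prop:change_of_charts-local}, to bound the $\mathcal{F}$-norm of the transformed component by a constant times $\|g_t-\tilde g_t\|_{\varphi_t(K_t)}\leq\|(g_s)-(\tilde g_s)\|_{\mathcal{Z}}$ — and so their difference is a continuous function of the tuple whose zero set is closed. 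Intersecting $\mathcal{Z}_\infty$ with all these (countably many, since $T$ is countable and $\Phi$ locally finite) closed overlap-compatibility sets, one obtains exactly the tuples that patch to a well-defined continuous map $g\colon M\to N$; the bound $\sup_t\|g_t\|_{\varphi_t(K_t)}<\infty$ from membership in $\mathcal{Z}$ plus finiteness of each $\mathcal{F}$-norm gives $g\in C^{\mathcal{F}}(M,N)$, and then automatically $g\in\mathcal{B}$. Hence $\iota(\mathcal{B})$ is an intersection of closed sets, so closed.

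The main obstacle I anticipate is writing down the overlap-compatibility identity cleanly and checking its continuity in the $\mathcal{Z}$-topology uniformly in the pair $(t,t')$; the subtlety is that the constant $\kappa$ in the Composition property (I) depends on $\max\{[g]_K,[g']_K\}$, so one must first restrict to the bounded set $\mathcal{Z}_\infty$ (where these seminorms are uniformly controlled by $\sup_t\epsilon_t<\infty$ via property (v) and the triangle inequality with $f$) before the continuity estimate becomes uniform — which is exactly why the slabs $\mathcal{Z}_t$ are introduced first. Everything else is bookkeeping with the triangle inequality and the three structural properties (I)--(III).
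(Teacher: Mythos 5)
Your proposal is correct, and its analytic core is the same as the paper's; the difference is in the packaging. The paper proves sequential closedness directly: given $\iota(g_k)\to\gamma$ in $\mathcal{Z}$, it shows the coordinates $\gamma_t$ agree on overlaps (via the Composition and Monotonicity properties applied to the transition maps $\psi_t\circ\psi_s^{-1}$, $\varphi_t\circ\varphi_s^{-1}$), glues them to a continuous $g\colon M\to N$ using that the interiors of the $K_t$ cover $M$, and then checks $g\in C^\mathcal{F}(M,N)$ on an arbitrary compact set via local finiteness and the Gluing property~(II) — exactly the content you delegate to your ``compatibility'' sets and to the last sentence of your third paragraph. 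You instead exhibit $\iota(\mathcal{B})$ as the intersection of the norm slabs with the overlap-compatibility constraints; this makes explicit that the $\epsilon_t$-bounds survive passage to the limit (so the limit lies in $\mathcal{B}$ and not merely in $C^\mathcal{F}(M,N)$), a point the paper leaves implicit, but it obliges you to prove the reverse inclusion ``compatible tuple in your set $\mathcal{Z}_\infty$ $\Rightarrow$ tuple realized by some $g\in\mathcal{B}$'', whose nontrivial part is precisely the paper's gluing argument, so no work is saved. Three small corrections to your plan: (a) the uniformity in the pair $(t,t')$ that you worry about is unnecessary — an arbitrary intersection of closed sets is closed, so plain continuity for each fixed pair suffices, and for the compatibility constraint even $C^0$-continuity (local Lipschitzness of $\psi_t\circ\psi_s^{-1}$ on compacta) is enough, properties (I) and (III) being needed only for the later step $g\in C^\mathcal{F}(M,N)$, as in Proposition~\ref{prop:change_of_charts-local}; (b) the identity is not affine in the tuple, since $\psi_t\circ\psi_s^{-1}$ is nonlinear, though nothing in your argument uses affineness; (c) to make sense of the local realizations $\psi_t^{-1}\circ\bigl(\psi_t\circ f\circ\varphi_t^{-1}-g_t\bigr)\circ\varphi_t$ and of the transition-map identity, you must keep the values inside $\psi_t(V_t)$ (indeed inside a fixed compact subset of it), and this is where property (iv) of Definition~\ref{def:special_sub-basic_nhd} — not only (v) — enters, via the slab bound $\|g_t\|_{\varphi_t(K_t)}\le\epsilon_t<\mathrm{dist}\bigl(\psi_t\circ f(K_t),\mathrm{bd}\,\psi_t(V_t)\bigr)$.
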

%

\noindent
{\it Proof of Claim\/}:
For an arbitrary sequence in $\iota(\mathcal{B})$, 
convergent in $\mathcal{Z}$, 
we will show the limit is also in $\iota(\mathcal{B})$.
Observe that any sequence in 
$\iota(\mathcal{B})$ 
has the form $\iota(g_k)$ for some sequence $g_k$ in $\mathcal{B}$.
Assume that $\iota(g_k)$ converges to 
$\gamma=(\gamma_t)_{t\in T}$ in $\mathcal{Z}$.
Thus, 
for all $t\in T$, 
$\gamma_t\in C^\mathcal{F}(\varphi_t(K_t),\mathbb{R}^d)$ 
and 
\begin{equation}
\lim_{k\to\infty}\left\|\gamma_t-\psi_t\circ g_k\circ \varphi_t^{-1}\right\|_{\varphi_t(K_t)}
=
0 \ .
\end{equation}
First, let us show that $\gamma$ is realised by a continuous mapping, 
{\it i.e.}, 
there exists $g\in C^0(M,N)$ such that $\gamma=\iota(g)$.
Observe that 
$\gamma_t|_{\interior{K}_t}$ is continuous and 
$\{\interior{K}_t\}_{t\in T}$ cover $M$.
Therefore it suffices to show that, 
for any $s,t\in T$, 
\begin{equation}\label{eq:gammat=gammas}
\gamma_t|_{\varphi_t(\interior{K}_s\cap\interior{K}_t)}
=
\gamma_s|_{\varphi_s(\interior{K}_s\cap\interior{K}_t)}
\end{equation}
If we define
$\varphi_{t,s}=\varphi_t\circ\varphi_s^{-1}$
and
$\psi_{t,s}=\psi_t\circ\psi_s^{-1}$, whenever they are defined,
then it suffices to show that, for all $s,t\in T$,
\begin{equation}\label{eq:gammat=gammas_in_t-coord}
\gamma_t|_{\varphi_t(K_s\cap K_t)}
=
\psi_{t,s}\circ\gamma_s\circ\varphi_{t,s}^{-1}|_{\varphi_t(K_s\cap K_t)} \ .
\end{equation}
Fix a positive integer $k$.
By the triangle inequality, 
followed by the Monotonicity property (III) 
and the Composition property (I),
\begin{align}
&
\|\gamma_t-\psi_{t,s}\circ\gamma_s\circ\varphi_{t,s}^{-1}\|_{\varphi_t(K_s\cap K_t)}\notag\\
&\leq
\|\gamma_t-\psi_t\circ g_k\circ\varphi_t^{-1}\|_{\varphi_t(K_s\cap K_t)}\\
&\quad+
\|\psi_t \circ g_k\circ \varphi_t^{-1}-\psi_{t,s}\circ\gamma_s\circ\varphi_{t,s}^{-1}\|_{\varphi_t(K_s\cap K_t)}\notag\\
&\leq 
\|\gamma_t-\psi_t \circ g_k\circ\varphi_t^{-1}\|_{\varphi_t(K_t)}\\
&\quad+
\|\psi_{t,s}\circ\psi_s \circ g_k \circ\varphi_s^{-1}\circ\varphi_{t,s}^{-1}-\psi_{t,s}\circ\gamma_s\circ\varphi_{t,s}^{-1}\|_{\varphi_t(K_s\cap K_t)}\notag\\
&\leq
\|\gamma_t-\psi_t \circ g_k\circ \varphi_t^{-1}\|_{\varphi_t(K_t)}
+
C_{s,t}\|\psi_s \circ g_k \circ\varphi_s^{-1}-\gamma_s\|_{\varphi_s(K_s)}
\end{align}
where $C_{s,t}$ is a positive real number independent of $k$.
Since, for any $t\in T$, 
$\|\gamma_t-\psi_t \circ g_k\circ \varphi_t^{-1}\|_{\varphi_t(K_t)}$ 
can be made arbitrarily small by choosing $k$ sufficiently large, it follows that
$\|\gamma_t-\psi_{t,s}\circ\gamma_s\circ\varphi_{t,s}^{-1}\|_{\varphi_t(K_s\cap K_t)}=0$.
Consequently, equality~\eqref{eq:gammat=gammas_in_t-coord} holds. 
Since $s, t\in T$ were arbitrary,
$\gamma$ is realised by a mapping $g\in C^0(M,N)$, as required.

Next, let us show that $g\in C^\mathcal{F}(M,N)$.
By definition, this means showing that, for any charts 
$(U,\varphi)$ and $(V,\psi)$ of $M$ and $N$ respectively, 
and any compact subset $K$ of $U$,
the norm 
$\|\psi\circ g\circ \varphi^{-1}\|_{\varphi(K)}$ 
is finite.
Since $g\in C^0(M,N)$ and $K$ is compact in $U$,
\begin{equation}
\|\psi\circ g\circ\varphi^{-1}\|_{C^0(\varphi(K),\mathbb{R}^d)}<\infty \ .
\end{equation}
Thus, we only need to consider the $\mathcal{F}$-semi-norms.
Observe that 
the local finiteness of the collection of charts, 
together with the compactness of $K$,
implies that only finitely many $K_t$, $t\in T$, intersect $K$.
Thus, by the Gluing property, 
there exists a positive real number $C$ such that
\begin{equation}
\left[\psi\circ g\circ\varphi^{-1}\right]_{\varphi(K)}
\leq 
C\sum_{t\in T} 
\left[\psi\circ g\circ\varphi^{-1}\right]_{\varphi(K_t\cap K)} \ .
\end{equation}
(We remind the reader that this sum is finite.)
The Composition property (I) implies that, 
for some positive real number $\kappa_t$,
\begin{align}
\left[\psi\circ g\circ\varphi^{-1}\right]_{\varphi(K_t\cap K)}
&=
\left[(\psi\circ\psi_t^{-1})\circ\psi_t\circ g\circ\varphi_t^{-1}(\varphi_t\circ \varphi^{-1})\right]_{\varphi(K_t\cap K)}\\
&\leq
\kappa_t \left\|\psi_t\circ g\circ\varphi_t^{-1}\right\|_{\varphi_t(K_t\cap K)} \ .
\end{align}
Together with the Monotonicity property (III) this therefore implies that
\begin{align}
\left[\psi\circ g\circ\varphi^{-1}\right]_{\varphi(K_t\cap K)}
&\leq
\kappa_t \left\|\psi_t\circ g\circ\varphi_t^{-1}\right\|_{\varphi_t(K_t)} \ .
\end{align}
Convergence in $\mathcal{Z}$ implies that, for $k$ sufficiently large,
\begin{equation}
\left\|\psi_t\circ g\circ\varphi_t^{-1}\right\|_{\varphi_t(K_t)}
\leq
\left\|\psi_t\circ g_k\circ\varphi_t^{-1}\right\|_{\varphi_t(K_t)}
+1
<\infty \ .
\end{equation}
Consequently
$\left\|\psi_t\circ g\circ\varphi_t^{-1}\right\|_{\varphi_t(K_t)}$ 
is finite, for all $t\in T$.
Hence, the semi-norm 
$\left[\psi\circ g\circ\varphi^{-1}\right]_{\varphi(K)}$ 
is finite.
Thus
$g\in C^\mathcal{F}(M,N)$, and the claim is shown.
/\!/

\vspace{10pt}

\noindent
Since a closed subspace of a complete metric space 
is also a complete metric space, 
it follows that the pullback metric $\rho$ 
of the induced metric on 
$\iota(\mathcal{B})$ is complete.
\end{proof}
%
\subsection{The main result} \label{subsect:main}
%
In this section we state and prove our main result, namely Theorem \ref{thm:Whitney_topologies_are_Baire}. 
Before we do that, however, we still need the following auxiliary result. 

\begin{proposition}\label{prop:F-Whitney_sprinkled}
Let 
$\mathcal{B}$ be a special sub-basic neighbourhood and 
$\rho$ be the complete metric for $\mathcal{B}$ as given in section~\ref{sect:subbasic_sets} 
above.
\begin{enumerate}
\item\label{prop:*+weak-Whitney}
Assume $T$ is finite.
Then for
each open set $\mathcal{B}_1$ in the weak $\mathcal{F}$-Whitney topology 
which intersects $\mathcal{B}$, 
the intersection $\mathcal{B}\cap\mathcal{B}_1$ 
contains the $\rho$-closure of an open ball with respect to the metric $\rho$, 
and 
this open ball is also a basic set in the weak $\mathcal{F}$-Whitney topology.
Moreover, this ball can be chosen with arbitrarily small diameter with respect to $\rho$.
\item\label{prop:**+strong-Whitney}
Assume $T$ is countable.
Then for
each open set $\mathcal{B}_1$ in the strong $\mathcal{F}$-Whitney topology, 
which intersects $\mathcal{B}$, 
the intersection $\mathcal{B}\cap\mathcal{B}_1$
contains the $\rho$-closure of a basic set in the strong $\mathcal{F}$-Whitney topology 
with arbitrarily small diameter with respect to $\rho$.
\end{enumerate}
\end{proposition}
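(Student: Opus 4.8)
The plan is to handle the two cases in parallel, since the geometry is the same: in both we must start from a point $g \in \mathcal{B} \cap \mathcal{B}_1$ and produce a basic neighbourhood of $g$ (weak in case~\ref{prop:*+weak-Whitney}, strong in case~\ref{prop:**+strong-Whitney}) whose $\rho$-closure stays inside $\mathcal{B}\cap\mathcal{B}_1$ and which has small $\rho$-diameter. First I would pick $g \in \mathcal{B}\cap\mathcal{B}_1$; since $\mathcal{B}_1$ is open I may shrink it to a basic neighbourhood of $g$, so without loss of generality $\mathcal{B}_1 = \mathcal{N}_s(g;\Phi',\Psi',K',\epsilon')$ (a single weak sub-basic set in case~\ref{prop:*+weak-Whitney}, using that $T$ finite makes weak and strong agree on the relevant index set). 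Writing $\mathcal{B} = \bigcap_{t\in T}\overline{\mathcal{N}}(f;(U_t,\varphi_t),(V_t,\psi_t),K_t,\epsilon_t)$, let $r_t = \|\psi_t\circ f\circ\varphi_t^{-1} - \psi_t\circ g\circ\varphi_t^{-1}\|_{\varphi_t(K_t)}$; since $g\in\mathcal{B}$ we have $r_t \le \epsilon_t$, but to get an \emph{open} ball in the interior I must first retreat: by Proposition~\ref{prop:barN-closed} the interior of $\mathcal{B}$ is $\bigcap_t \mathcal{N}(f;\dots,\epsilon_t)$, so I will need $g$ in that interior. This is where the hypothesis that $\mathcal{B}_1$ \emph{intersects} $\mathcal{B}$ (not just $\operatorname{int}\mathcal{B}$) must be addressed — but since $\mathcal{B}_1$ is $\tau$-open and $\mathcal{N}$ is the $\tau$-interior of $\overline{\mathcal{N}}$, the set $\mathcal{B}_1 \cap \operatorname{int}_\tau\mathcal{B}$ is $\tau$-open and, by density of $\operatorname{int}_\tau\mathcal{B}$ in $\mathcal{B}$ (again Proposition~\ref{prop:barN-closed}), nonempty; so I may assume from the start that $g$ lies in $\operatorname{int}_\tau\mathcal{B}$, i.e.\ $r_t < \epsilon_t$ for all $t$.

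Next I would build the candidate ball. Fix a target radius $\sigma > 0$ (eventually taken as small as desired to control $\rho$-diameter). For each $t\in T$ set $\delta_t = \min\{\epsilon_t - r_t,\ \sigma\}$ and consider $\mathcal{W} = \mathcal{N}_s(g;\Phi,\Psi,K,\delta)$ — in case~\ref{prop:*+weak-Whitney}, since $T$ is finite this is a finite intersection of weak sub-basic sets, which is a basic set in the weak topology; in case~\ref{prop:**+strong-Whitney} it is by definition a strong basic set. The triangle inequality gives $\mathcal{W} \subset \mathcal{B}$: if $h\in\mathcal{W}$ then $\|\psi_t\circ f\circ\varphi_t^{-1}-\psi_t\circ h\circ\varphi_t^{-1}\|_{\varphi_t(K_t)} \le r_t + \delta_t \le \epsilon_t$. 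Moreover $\iota(\mathcal{W})$ sits inside the $\|\cdot\|_\mathcal{Z}$-ball of radius $\sup_t(r_t + \delta_t) \le \sup_t \epsilon_t < \infty$ centred appropriately, and its $\rho$-diameter is $\le 2\sup_t \delta_t \le 2\sigma$, which is small. The $\rho$-closure of $\mathcal{W}$ corresponds under $\iota$ to a closed $\|\cdot\|_\mathcal{Z}$-set; by a second application of the triangle inequality with $\le$ in place of $<$ it lands in $\mathcal{B}$, and indeed in $\operatorname{int}_\tau\mathcal{B}$ if I keep the radii strictly below $\epsilon_t - r_t$.

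The one genuinely delicate point — and the main obstacle — is arranging $\overline{\mathcal{W}}^{\,\rho} \subset \mathcal{B}_1$. I need to pass from control of the $\mathcal{F}$-norms on the given charts $(U_t,\varphi_t,K_t)$ to control on the charts $(U'_s,\varphi'_s,K'_s)$ defining $\mathcal{B}_1$. This is exactly what Proposition~\ref{prop:change_of_charts-local}(c) and Lemma~\ref{lem:change_of_charts} are for: in case~\ref{prop:*+weak-Whitney}, $\mathcal{B}_1$ is governed by one chart-triple $(U',\varphi',K')$, with $K'$ compact, so it is covered by finitely many $K_t$ (local finiteness of $\Phi$), and Proposition~\ref{prop:change_of_charts-local}(c) yields a constant $C$ with $\|\psi'\circ g\circ\varphi'^{-1} - \psi'\circ h\circ\varphi'^{-1}\|_{\varphi'(K')} \le C\sum_{t: K_t\cap K'\ne\emptyset}\|\psi_t\circ g\circ\varphi_t^{-1}-\psi_t\circ h\circ\varphi_t^{-1}\|_{\varphi_t(K_t)}$; choosing $\sigma$ (hence the $\delta_t$) small enough that $C$ times the finite sum is $< \epsilon'$, with strict inequality preserved under the $\rho$-closure, gives $\overline{\mathcal{W}}^{\,\rho}\subset\mathcal{B}_1$. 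In case~\ref{prop:**+strong-Whitney}, Lemma~\ref{lem:change_of_charts} applied to the strong-open set $\mathcal{B}_1$ and the locally finite data $(\Phi,\Psi,K)$ produces a family $\{\epsilon_t''\}_{t\in T}$ with $\mathcal{N}_s(g;\Phi,\Psi,K,\epsilon'') \subset \mathcal{B}_1$; I then take $\delta_t = \min\{\tfrac12(\epsilon_t - r_t),\ \tfrac12\epsilon_t'',\ \sigma\}$, so that $\mathcal{W}$ lies in both $\operatorname{int}_\tau\mathcal{B}$ and $\mathcal{B}_1$ and its $\rho$-closure (obtained by relaxing $<\delta_t$ to $\le\delta_t$, still $< \min\{\epsilon_t - r_t, \epsilon_t''\}$) remains inside $\mathcal{B}\cap\mathcal{B}_1$. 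The $\rho$-diameter bound $\le 2\sigma$ holds throughout, giving the "arbitrarily small" clause. The subtlety to watch is that in the countable case the $\epsilon_t''$ from Lemma~\ref{lem:change_of_charts} need not be bounded below, but that is harmless since $\mathcal{W}$ need not be a special sub-basic set — only a strong basic set — and its $\rho$-closure is automatically $\rho$-bounded because it sits inside $\iota(\mathcal{B})$, which has finite $\|\cdot\|_\mathcal{Z}$-diameter by property~(v).
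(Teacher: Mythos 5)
Your argument is correct and follows essentially the same route as the paper: in the weak case you transfer control from the charts of $\mathcal{B}$ to the compact sets of $\mathcal{B}_1$ via Proposition~\ref{prop:change_of_charts-local}(c), using that only finitely many $K_t$ are involved, and in the strong case you invoke Lemma~\ref{lem:change_of_charts} to obtain $\mathcal{N}_s(g;\Phi,\Psi,K,\epsilon'')\subset\mathcal{B}_1$ and then shrink the radii to control the $\rho$-diameter and preserve the inclusions under $\rho$-closure --- this is exactly the paper's proof, with your treatment of the closure being, if anything, more explicit. Two small points. First, part~(1) of the statement asks for an open $\rho$-ball; your $\mathcal{W}=\mathcal{N}_s(g;\Phi,\Psi,K,\delta)$ has unequal radii $\delta_t$, so it is a weak basic set but not literally a ball --- since $T$ is finite, replace all $\delta_t$ by $\min_{t\in T}\delta_t>0$ (the paper achieves the same thing by intersecting finitely many concentric balls). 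Second, your reduction to $g\in\mathrm{int}_\tau\mathcal{B}$ rests on the density of $\mathrm{int}_\tau\mathcal{B}$ in $\mathcal{B}$, which you attribute to Proposition~\ref{prop:barN-closed}; that proposition only identifies the interior of a single closed sub-basic set and says nothing about density. The paper leans on the same fact (it asserts without proof that $\mathcal{N}_s(f;\Phi,\Psi,K,\epsilon)$ sits densely in $\mathcal{B}$, and in the weak case simply assumes $\mathcal{B}_1\subset\mathcal{B}$), so you are at the paper's level of rigor here, but the citation is inaccurate and this step would deserve its own short justification.
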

\begin{proof}
Consider the first statement on the weak $\mathcal{F}$-Whitney topology.
Take an arbitrary non-empty open set $\mathcal{B}_1$. 
We may assume that $\mathcal{B}_1$ is contained in $\mathcal{B}$ and also that $\mathcal{B}_1$ is a weak basic set, 
{\it i.e.},
\begin{equation}
\mathcal{B}_1
=
\bigcap_{s\in S}
\mathcal{N}(g_s;(U_s',\varphi_s'),(V_s',\psi_s'),K_s',\epsilon_s')
\end{equation}
for some finite index set $S$.
Take $g_1\in\mathcal{B}_1$.
For $s\in S$, define
\begin{equation}
r_s=
\epsilon_s'-\|\psi_s'\circ g_1\circ(\varphi_s')^{-1}-\psi_s'\circ g_s\circ(\varphi_s')^{-1}\|_{\varphi_s'(K_s')} \ .
\end{equation}
Since $g_1\in \mathcal{B}_1$, the number $r_s$ is positive.
We will show that, for each $s\in S$, there exists a positive real number $\varrho_s$ for which
\begin{equation}\label{incl:Neps_s}
\bigcap_{t\in T}
\mathcal{N}(g_1;(U_t,\varphi_t),(V_t,\psi_t),K_t,\varrho_s)
\subset
\mathcal{N}(g_s;(U_s',\varphi_s'),(V_s',\psi_s'),K_s',\epsilon_s') \ .
\end{equation} 
By Proposition~\ref{prop:change_of_charts-local}, as the index set $T$ is finite,
it follows that there exists $\varrho_s$ sufficiently small for which $g$ satisfying
\begin{align}
\|\psi_t\circ g_1\circ\varphi_t^{-1}-\psi_t\circ g\circ\varphi_t^{-1}\|_{\varphi_t(K_t)}<\varrho_s
\end{align}
for all $t\in T$, 
implies that
\begin{equation}
\|\psi_s'\circ g_1\circ(\varphi_s')^{-1}-\psi_s'\circ g\circ(\varphi_s')^{-1}\|_{\varphi_s'(K_s')}
<
r_s \ .
\end{equation}
Consequently
\begin{align}
&
\|\psi_s'\circ g_s\circ(\varphi_s')^{-1}-\psi_s'\circ g\circ(\varphi_s')^{-1}\|_{\varphi_s'(K_s')}
<\epsilon_s \ ,
\end{align}
and thus the inclusion~\eqref{incl:Neps_s} is shown for each $s\in S$.
Observe that the lefthand-side of~\eqref{incl:Neps_s} is just the $\varrho_s$-ball about $g_1$, with respect to the metric $\rho$.
Since the finite intersection of concentric non-empty balls is a ball, it follows that $\mathcal{B}_1$ contains a ball about $g_1$.
Replacing $\varrho_s$ in the above argument by any small positive real number also produces a ball contained in $\mathcal{B}_1$ about $g_1$ with 
small diameter, which shows the last statement.

Next, consider the strong $\mathcal{F}$-Whitney topology.
Recall that $\mathcal{B}$ has the form
\begin{equation}
\mathcal{B}=\bigcap_{t\in T}\overline{\mathcal{N}}(f,(U_t,\varphi_t),(V_t,\psi_t),K_t,\epsilon_t)
\end{equation}
for a locally finite collection of charts $\Phi=\{(U_t,\varphi_t)\}_{t\in T}$ on $M$, etc., 
and thus contains 
$\mathcal{N}_s(f;\Phi,\Psi,K,\epsilon)$ 
densely.
We may assume that $\mathcal{B}_1$ is a strong basic neighbourhood, 
{\it i.e.},
\begin{equation}
\mathcal{B}_1=\mathcal{N}_s(g';\Phi',\Psi',K',\epsilon')
\end{equation}
where 
$\Phi'=\{(U_s',\varphi_s')\}_{s\in S}$ 
is a locally finite collection of charts on $M$, 
$\Psi'=\{(V_s',\psi_s')\}_{s\in S}$ 
is a collection of charts on $N$, 
$K'=\{K_s'\}_{s\in S}$ 
is a collection of compact sets with $K_s'\subset U_s'$ and $g(K_s')\subset V_s'$. 
and 
$\epsilon'=\{\epsilon_s'\}_{s\in S}$
is a collection of extended positive real numbers.
As the open set 
$\mathcal{N}_s(f;\Phi,\Psi,K,\epsilon)$ 
is dense in $\mathcal{B}$ and $\mathcal{B}_1$ is open,
if 
$\mathcal{B}_1$ intersects $\mathcal{B}$ then so does 
$\mathcal{N}_s(f;\Phi,\Psi,K,\epsilon)$.
Take 
$g\in \mathcal{B}_1\cap \mathcal{N}_s(f;\Phi,\Psi,K,\epsilon)$.
By Lemma~\ref{lem:change_of_charts}, 
there exists a family of extended positive real numbers
$\delta=\{\delta_t\}_{t\in T}$ 
such that for
\begin{equation}
\mathcal{B}_2=\mathcal{N}_s(g;\Phi,\Psi,K,\delta)
\end{equation}
we have 
$\mathcal{B}_2\subset \mathcal{B}_1\cap \mathcal{N}_s(f;\Phi,\Psi,K,\epsilon)$.
Also observe that shrinking each $\delta_t$ does not affect this inclusion.
Finally, observing that 
$\mathrm{diam}_\rho(\mathcal{B}_2)=\sup_{t\in T} 2\delta_t$,
gives the result.
\end{proof}
We are now in a position to prove the following.
\begin{theorem}\label{thm:Whitney_topologies_are_Baire}
Let $M$ and $N$ be smooth manifolds.
Let $\mathcal{F}$ be a family of semi-norms satisfying properties (I)--(III).
Then both the weak and strong $\mathcal{F}$-Whitney topologies on $C^\mathcal{F}(M,N)$ 
satisfy the Baire property.
\end{theorem}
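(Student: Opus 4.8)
The plan is to apply the machinery built up in the preliminaries, namely Lemma~\ref{lem:locally_sprinkled}, which reduces the Baire property to showing that $C^\mathcal{F}(M,N)$ is \emph{locally sprinkled} in each of the two topologies. So given a non-empty open set $\mathcal{Y}$ in the weak (resp.\ strong) $\mathcal{F}$-Whitney topology, I must produce inside $\mathcal{Y}$ a set $\mathcal{Z}$ with non-empty $\tau$-interior carrying a complete metric $\rho_\mathcal{Z}$ for which $\mathrm{int}_\tau(\mathcal{Z})$ is $\rho_\mathcal{Z}$-sprinkled. The natural candidate for $\mathcal{Z}$ is a special sub-basic neighbourhood $\mathcal{B}$ as in Definition~\ref{def:special_sub-basic_nhd}, and for $\rho_\mathcal{Z}$ the pullback metric $\rho$ constructed via the embedding $\iota$ into the Banach space $\mathcal{Z}$; its completeness is exactly Lemma~\ref{lem:complete_metric}.

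First I would handle the \emph{weak} topology. Pick $f\in\mathcal{Y}$. Since weak sub-basic sets generate the weak topology, $\mathcal{Y}$ contains a finite intersection of weak sub-basic sets about $f$, whose (finite, hence trivially locally finite) collection of charts puts us in the situation of Lemma~\ref{lem:opens_contain_specials}; that lemma then furnishes a special sub-basic neighbourhood $\mathcal{B}$ about $f$ contained in $\mathcal{Y}$, with finite index set $T$. By Proposition~\ref{prop:barN-closed} the $\tau$-interior of $\mathcal{B}$ contains $\mathcal{N}_s(f;\Phi,\Psi,K,\epsilon)$ (intersection of the open sub-basic sets), so it is non-empty. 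It remains to check that $\mathrm{int}_\tau(\mathcal{B})$ is $\rho$-sprinkled: given a $\tau$-open $\mathcal{U}\subset\mathrm{int}_\tau(\mathcal{B})$ and $r>0$, Proposition~\ref{prop:F-Whitney_sprinkled}\eqref{prop:*+weak-Whitney} (applicable since $T$ is finite) says $\mathcal{B}\cap\mathcal{U}$ contains the $\rho$-closure of a weak-basic (hence $\tau$-open) ball of $\rho$-diameter less than $r$; that $\rho$-closed set is the required $E$. Then Lemma~\ref{lem:locally_sprinkled} gives the Baire property for the weak topology.

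The \emph{strong} case runs in parallel but now $T$ must be allowed to be countable. Given $f\in\mathcal{Y}$ with $\mathcal{Y}$ strong-open, by Lemma~\ref{lem:change_of_charts} (taking any locally finite cover of $M$ by charts with compact sets $K_t$ adapted to $f$, which exists by Remark~\ref{rmk:cpt_cover_with_covering_interiors} and the fact that such covers are countable by Remark~\ref{rmk:locally_finite_covers_are_countable}) there is a strong basic neighbourhood $\mathcal{N}_s(f;\Phi,\Psi,K,\epsilon)\subset\mathcal{Y}$; shrinking the $\epsilon_t$ to satisfy (iv) and (v) and passing to closed sub-basic sets, Lemma~\ref{lem:opens_contain_specials} yields a special sub-basic neighbourhood $\mathcal{B}$ about $f$ with $\mathcal{B}\subset\mathcal{Y}$ and countable $T$. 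Again $\mathrm{int}_\tau(\mathcal{B})\neq\emptyset$ by Proposition~\ref{prop:barN-closed}, and $\rho$ is complete by Lemma~\ref{lem:complete_metric}. For the sprinkling property, given $\tau$-open $\mathcal{U}\subset\mathrm{int}_\tau(\mathcal{B})$ and $r>0$, apply Proposition~\ref{prop:F-Whitney_sprinkled}\eqref{prop:**+strong-Whitney}: $\mathcal{B}\cap\mathcal{U}$ contains the $\rho$-closure of a strong-basic set of $\rho$-diameter less than $r$, which is the desired $E$. Lemma~\ref{lem:locally_sprinkled} then finishes the strong case.

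The main obstacle is not in the final assembly — which is essentially bookkeeping — but in being careful that at every stage the collections of charts stay locally finite (so that Proposition~\ref{prop:change_of_charts-local} and the Gluing property apply with finite sums) and that the index sets stay countable (so that $\mathcal{Z}$ is a genuine Banach space and the Cauchy-sequence argument in Lemma~\ref{lem:sprinkled} goes through). Concretely, the delicate point is the interface between Lemma~\ref{lem:opens_contain_specials} and Proposition~\ref{prop:F-Whitney_sprinkled}: one must ensure the special sub-basic neighbourhood $\mathcal{B}$ we extract has its $\tau$-interior genuinely non-empty \emph{and} that the metric $\rho$ attached to it is the one for which the sprinkling proposition was proved, i.e.\ that the chart data used to define $\rho$ is the same data witnessing the sprinkling — which is why it is essential that Proposition~\ref{prop:F-Whitney_sprinkled} produces the small $\rho$-ball relative to the \emph{same} family $\{(U_t,\varphi_t),(V_t,\psi_t),K_t\}_{t\in T}$ defining $\mathcal{B}$.
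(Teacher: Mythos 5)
Your proposal is correct and follows essentially the same route as the paper's proof: reduce to local sprinkledness via Lemma~\ref{lem:locally_sprinkled}, obtain inside each open set a special sub-basic neighbourhood (Lemma~\ref{lem:opens_contain_specials}) with non-empty $\tau$-interior (Proposition~\ref{prop:barN-closed}), equip it with the complete pullback metric $\rho$ (Lemma~\ref{lem:complete_metric}), and verify sprinkling via Proposition~\ref{prop:F-Whitney_sprinkled}, parts (1) and (2) for the weak and strong topologies respectively. The only difference is presentational: the paper writes out the weak case and treats the strong case \emph{mutatis mutandis}, while you spell out both, including the same finite/countable index-set distinction the paper relies on implicitly.
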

\begin{proof}
We will only consider the weak $\mathcal{F}$-Whitney topology.
(The proof for the strong $\mathcal{F}$-Whitney topology follows {\it mutatis mutandis}.)
By Lemma~\ref{lem:opens_contain_specials}, 
any open set $\mathcal{U}$ in the weak $\mathcal{F}$-Whitney topology 
contains a special sub-basic neighbourhood $\mathcal{B}$.
By Proposition~\ref{prop:barN-closed}, each special sub-basic neighbourhood 
has non-empty interior $\mathcal{B}_0$ with respect to the weak $\mathcal{F}$-Whitney topology.
By Lemma~\ref{lem:complete_metric}, 
any special sub-basic neighbourhood $\mathcal{B}$ 
possesses a complete metric $\rho$. 
Proposition~\ref{prop:F-Whitney_sprinkled}(1) implies that the interior $\mathcal{B}_0$ of $\mathcal{B}$ is $\rho$-sprinkled.
Since $\mathcal{U}$ was arbitrary, the weak $\mathcal{F}$-Whitney topology is locally sprinkled and hence,
by Lemma~\ref{lem:locally_sprinkled}, it satisfies the Baire property.
\end{proof}

\section{Examples}\label{sect:examples}
In the examples below, $M$ and $N$ are 
smooth manifolds of dimensions $m$ and $d$ respectively.
Given normed linear spaces $V$ and $W$ of dimension $m$ and $d$ respectively, 
let 
$L^n_\mathrm{sym}(V,W)$ 
denote space of symmetric $n$-linear maps from $V$ to $W$.
For each $n$, 
abusing notation slightly,
we denote the operator norm on 
$L^n_\mathrm{sym}(V,W)$ 
by $|\, \cdot \,|$.
We could also take any equivalent norm with the property that $|AB|\leq |A|\cdot|B|$. 
\subsection{H\"older- and Lipschitz-Whitney topologies}
For each integer $k\geq 0$, 
consider 
the family of {\it $C^k$-semi-norms}
\begin{equation}
[f]_{C^k,K}=\max_{0<j\leq k}\sup_{x\in  K}|D^j f(x)|
\end{equation}
where $K\subset\mathbb{R}^m$ is compact and $f$ is a $C^k$-mapping from a neighbourhood of $K$ to $\mathbb{R}^d$.
The weak and strong generalized Whitney topologies for this family of semi-norms
coincide respectively with the classical weak and strong $C^k$-Whitney topologies on $C^k(M,N)$. 
\begin{proposition}\label{prop:Ck-Whitney_is_I--III}
The family of $C^k$-semi-norms satisfies properties (I)--(III).
\end{proposition}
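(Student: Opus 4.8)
The plan is to verify each of the three properties (I)--(III) for the $C^k$-semi-norm family $[f]_{C^k,K}=\max_{0<j\le k}\sup_{x\in K}|D^j f(x)|$, in the order (III), (II), (I), since the difficulty is strictly increasing. For \emph{Monotonicity} (III), I would simply note that if $K'\subset K$ then for each $j$ the supremum of $|D^j f|$ over $K'$ is at most the supremum over $K$, and the maximum over $j$ respects this; so $[f]_{C^k,K'}\le [f]_{C^k,K}$ immediately. For the \emph{Gluing} property (II), given $K=\bigcup_{s\in S}K_s$ with $S$ finite, for each $j$ and each $x\in K$ one has $x\in K_s$ for some $s$, whence $|D^j f(x)|\le [f]_{C^k,K_s}\le\sum_{s\in S}[f]_{C^k,K_s}$; taking the supremum over $x\in K$ and then the max over $0<j\le k$ gives $[f]_{C^k,K}\le\sum_{s\in S}[f]_{C^k,K_s}$, so $C=1$ works (in fact independently of the sets). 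Both of these are essentially one-line verifications.

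The substance is the \emph{Composition} property (I). Here I would fix smooth diffeomorphisms $\psi$ (on the target side) and $\varphi$ (on the source side), set $h=\psi\circ g\circ\varphi^{-1}$ and $h'=\psi\circ g'\circ\varphi^{-1}$, and estimate $[h-h']_{C^k,\varphi(K)}$ in terms of $\|g-g'\|_K$. The natural tool is the Fa\`a di Bruno / higher-order chain rule: for $1\le j\le k$, $D^j(\psi\circ g\circ\varphi^{-1})$ at a point is a finite universal polynomial expression in the derivatives $D^i\psi$ (up to order $j$, evaluated along $g$), the derivatives $D^i(g\circ\varphi^{-1})$ (up to order $j$), and by another application the derivatives $D^i\varphi^{-1}$ (up to order $j$). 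Writing the difference $D^j h-D^j h'$ and using multilinearity, each term is a sum of products in which exactly one factor is a \emph{difference} — either of the form $D^i\psi(g(\cdot))-D^i\psi(g'(\cdot))$, controlled by $|D^{i+1}\psi|_{C^0}\,\|g-g'\|_{C^0(K)}$ via the mean value inequality over the relevant compact set, or of the form $D^i(g-g')\circ\varphi^{-1}$, controlled directly by $[g-g']_{C^k,K}\le\|g-g'\|_K$ together with the chain-rule factors coming from $\varphi^{-1}$ — while the remaining factors are bounded in terms of $\max\{|D^i\psi|\}$, $\max\{|D^i\varphi^{-1}|\}$ over the compacts in question, and the derivatives of $g$ and $g'$, i.e.\ in terms of $\max\{[g]_{C^k,K},[g']_{C^k,K}\}$. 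Collecting, $[h-h']_{C^k,\varphi(K)}\le \kappa\,\|g-g'\|_K$ with $\kappa$ a polynomial in the stated quantities; and for $\|g-g'\|_K$ small one checks that the factors involving $g'$ may be absorbed into those involving $g$ up to a harmless constant, so $\kappa$ depends only on the derivatives of $\psi$, $\varphi$ and on $\max\{[g]_{C^k,K},[g']_{C^k,K}\}$, as required.

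The only genuine obstacle is organising the Fa\`a di Bruno bookkeeping cleanly enough to isolate ``exactly one difference factor per term'' without drowning in indices; once that is set up, every individual estimate is a mean-value inequality on a compact set. I would keep the presentation economical by treating the two compositions $\psi\circ(\cdot)$ and $(\cdot)\circ\varphi^{-1}$ separately (the second being linear in $g$, hence trivial for differences), so that the difference $h-h'$ only ever sees the nonlinearity through $\psi$, and then invoking the standard higher chain-rule estimate for $\psi\circ g-\psi\circ g'$ on the $C^k$ scale.
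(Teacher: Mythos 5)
Your proposal is correct and follows essentially the same route as the paper: properties (II)--(III) are dispatched as trivial sup-norm facts, and property (I) is split into the (linear) precomposition with $\varphi^{-1}$ and the postcomposition with $\psi$, handled via the higher-order chain rule (the Composition Mapping Formula), a telescoped difference with exactly one difference factor per term, and the mean value inequality over a neighbourhood of the convex hull of $g(K)\cup g'(K)$, yielding a constant depending only on $k$, the derivatives of $\psi,\varphi$ and $\max\{[g]_{C^k,K},[g']_{C^k,K}\}$.
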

\begin{proof}
Throughout we take $x\in K$ and $y=\varphi(x)$.
First, to prove (I) we break it into two parts:
\begin{enumerate}
\item[(i)]
$[g\circ \varphi^{-1}-g'\circ \varphi^{-1}]_{C^k,K}\leq \kappa_1 [g-g']_{C^k,K} \, $;
\item[(ii)] 
$[\psi\circ g-\psi\circ g']_{C^k,K}\leq \kappa_2 \left(\|g-g'\|_{C^0(K,\mathbb{R}^d)}+[g-g']_{C^k,K}\right) \, $.
\end{enumerate}
for constants $\kappa_1$ and $\kappa_2$.
The Composition Mapping Formula~\cite[p.3]{AbrahamRobbinBook1967} 
implies that
\begin{align}
|D^k(g\circ \varphi^{-1})(y)-D^k(g\circ\varphi^{-1})(y)|
&\leq 
k K_1 \sup_{0<j\leq k}|D^j g(x)-D^jg'(x)|
\end{align}
where $K_1$ depends only upon $k$. 
Taking the supremum over all $x\in K$ gives (i).
For (ii), by the Composition Mapping Formula again, it suffices to show that, for any multi-index $i=(i_1,\ldots,i_j)$ with $|i|=k$,
\begin{equation}
\left|
D^j\psi(g(x))\left(D^{i_1}g(x),\ldots,D^{i_j}g(x)\right)-D^j\psi(g'(x))\left(D^{i_1}g'(x),\ldots,D^{i_j}g'(x)\right)
\right|
\end{equation}
is bounded from above by $\kappa_2 [g-g']_{C^k,K}$, for some $\kappa_2$ independent of $x\in K$.
Writing this as a telescoped sum and applying the triangle inequality gives the upper bound
\begin{align}
&|D^j\psi(g(x))-D^j\psi(g'(x))|\notag\\
&\quad+\sum_{1\leq \ell\leq j}\left|D^j\psi(g'(x))\left(D^{i_1}g'(x),\ldots,D^{i_\ell}(g-g')(x),\ldots,D^{i_j}g(x)\right)\right|\\
&\leq
|D^{j+1}\psi|_{\bar{U}}|g(x)-g'(x)|\notag\\
&\quad+\sum_{1\leq \ell\leq j} |D^j\psi(g'(x))| |D^{i_1}g'(x)|\ldots |D^{i_\ell}(g-g')(x)|\ldots |D^{i_j}g(x)|\\
&\leq
K_2(\|g-g'\|_{C^0(K,\mathbb{R}^d)}+[g-g']_{C^k,K}) \ .
\end{align}
The second inequality follows from the Mean Value Theorem~\cite[p.4]{AbrahamRobbinBook1967}
, where $U$ is a bounded open set containing the convex hull of $g(K)\cup g'(K)$.
Here $K_2$ depends only upon $k$, $[\psi]_{C^{k+1},K}$ and $\max\left\{[g]_{C^k,K}, [g']_{C^k,K}\right\}$, but not on the point $x\in K$.

Finally, Properties (II) and (III) hold as the sup-norm satisfies these properties.
\end{proof}
For $\alpha\in (0,1)$, 
consider the family of
{\it $\alpha$-H\"older semi-norms} defined by
\begin{equation}
\left[g\right]_{\alpha,K}
=
\sup_{x,y\in K: x\neq y}\frac{\left|g(x)-g(y)\right|_E}{d(x,y)^\alpha}
\end{equation}
where, again, $K\subset\mathbb{R}^m$ is compact and $g$ is an $\alpha$-H\"older mapping from $K$ into a Banach space $(E,|\,\cdot\,|_E)$.
In practice, this Banach space will be $L^k_\mathrm{sym}(\mathbb{R}^m,\mathbb{R}^d)$.
Define the family of 
{\it Lipschitz semi-norms} $[\,\cdot\,]_{\mathrm{Lip},K}$ similarly. 
\footnote{We use this notation, rather than $[\,\cdot\,]_{1,K}$, to prevent possible ambiguity.}
\begin{proposition}\label{prop:Holder-Whitney_is_I--III}
For each $\alpha\in (0,1)$, the family of $\alpha$-H\"older and Lipschitz semi-norms satisfies properties (I)--(III). 
\end{proposition}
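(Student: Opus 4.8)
The plan is to verify properties (I)--(III) for the $\alpha$-H\"older and Lipschitz families by reducing to estimates on difference quotients, in the same spirit as the $C^k$ case just treated. For the composition property (I), I would split the verification into a pre-composition estimate and a post-composition estimate. Pre-composition with $\varphi^{-1}$ is easy: for $x, y \in \varphi(K)$ one has $d(\varphi^{-1}(x), \varphi^{-1}(y)) \le [\varphi^{-1}]_{\mathrm{Lip}, \varphi(K)}\, d(x,y)$, so
\begin{equation*}
\frac{\bigl|(g\circ\varphi^{-1})(x) - (g'\circ\varphi^{-1})(x) - (g\circ\varphi^{-1})(y) + (g'\circ\varphi^{-1})(y)\bigr|}{d(x,y)^\alpha}
\le [\varphi^{-1}]_{\mathrm{Lip}}^\alpha\, [g - g']_{\alpha, K} \ ,
\end{equation*}
and taking the supremum gives the bound with $\kappa_1 = [\varphi^{-1}]_{\mathrm{Lip},\varphi(K)}^\alpha$. (In the $C^k$-H\"older case one first differentiates $k$ times using the Composition Mapping Formula exactly as in Proposition~\ref{prop:Ck-Whitney_is_I--III}, which produces terms already controlled by the $C^k$-norm, and then applies this H\"older-quotient estimate to the top-order term.)

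For post-composition with $\psi$, the telescoping trick is again the right tool. Writing
\begin{equation*}
\psi(g(x)) - \psi(g'(x)) - \psi(g(y)) + \psi(g'(y))
=
\bigl[\psi(g(x)) - \psi(g(y))\bigr] - \bigl[\psi(g'(x)) - \psi(g'(y))\bigr] \ ,
\end{equation*}
I would use the fundamental theorem of calculus along the segments joining $g(x)$ to $g(y)$ and $g'(x)$ to $g'(y)$, so each bracket becomes an integral of $D\psi$ against the increment of $g$ (resp. $g'$). Subtracting and adding a mixed term, one piece is bounded by $\|D\psi\|_{C^0}$ times $|(g-g')(x) - (g-g')(y)| \le [g-g']_{\alpha,K}\, d(x,y)^\alpha$, and the other by $\|D^2\psi\|_{C^0}$ times $|g(x)-g(y)|$ times the sup of $|g(x)-g'(x)|$ and $|g(y)-g'(y)|$; since $|g(x)-g(y)| \le [g]_{\alpha,K}\, d(x,y)^\alpha$, dividing by $d(x,y)^\alpha$ and taking the supremum gives a bound of the form $\kappa_2\bigl(\|g-g'\|_{C^0(K,\mathbb{R}^d)} + [g-g']_{\alpha,K}\bigr)$, with $\kappa_2$ depending only on the derivatives of $\psi$ on a bounded open set containing the convex hull of $g(K)\cup g'(K)$ and on $\max\{[g]_{\alpha,K},[g']_{\alpha,K}\}$, as required by the "moreover" clause of (I). The Lipschitz case is literally the $\alpha = 1$ specialization of every inequality above.

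Properties (II) and (III) are comparatively routine. Monotonicity (III) is immediate since restricting to a smaller compact set only shrinks the set of pairs $(x,y)$ over which the supremum defining $[\cdot]_{\alpha,K}$ is taken. For the gluing property (II), given $K = \bigcup_{s\in S} K_s$, I would bound $[g]_{\alpha,K}$ by splitting the supremum over pairs $(x,y)$: either both points lie in a common $K_s$, in which case the quotient is $\le \max_s [g]_{\alpha,K_s}$; or they lie in different pieces, in which case I would use a Lebesgue-number / chaining argument. Picking a point in each nonempty pairwise intersection (refining the cover first, using Remark~\ref{rmk:cpt_cover_with_covering_interiors} if needed so interiors cover), one writes the increment $g(x)-g(y)$ as a telescoping sum over a chain of at most $|S|$ points, each consecutive pair lying in a common $K_s$, and bounds $d$-distances between successive chain points in terms of $\mathrm{diam}(K)$ and the geometry of the $K_s$; this yields $[g]_{\alpha,K} \le C\sum_{s\in S}[g]_{\alpha,K_s}$ with $C$ depending on $\{K_s\}_{s\in S}$ only. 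The main obstacle is this last chaining step: one must be a little careful that the constant $C$ depends only on the configuration of the compact sets (their diameters and how they overlap) and not on $g$; once the finiteness of $S$ and the covering-by-interiors condition are in place this is purely metric bookkeeping, but it is the one place where a naive estimate would fail for a disconnected or badly-overlapping cover.
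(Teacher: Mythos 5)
Your verifications of (I) and (III) are correct, and for (I) your route is genuinely more self-contained than the paper's: the paper disposes of (I) by citing the H\"older Rescaling Principle of \cite{deFariaHazardTresser2017b}, whereas you reprove it directly by the pre-composition estimate with constant $[\varphi^{-1}]_{\mathrm{Lip},\varphi(K)}^{\alpha}$ and the telescoping/fundamental-theorem argument for post-composition; the resulting constant depends only on $D\psi$, $D^2\psi$ on a neighbourhood of the convex hull of $g(K)\cup g'(K)$ and on $\max\{[g]_{\alpha,K},[g']_{\alpha,K}\}$, which is exactly what the ``moreover'' clause of (I) asks for, and the Lipschitz case is indeed the $\alpha=1$ specialization. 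Property (III) is trivial in both treatments.

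The gap is in (II). The paper does not argue this directly: it invokes the Second H\"older Gluing Principle \cite[Prop.~2.4]{deFariaHazardTresser2017b} and the Lipschitz Gluing Principle \cite[Lemma~B.1]{deFariaHazardTresser2017b}, and those statements carry geometric hypotheses on the compact pieces which your chaining argument quietly needs but cannot supply. First, in (II) the finite collection $\{K_s\}_{s\in S}$ is \emph{given}; you are not free to ``refine the cover so interiors cover'' --- Remark~\ref{rmk:cpt_cover_with_covering_interiors} is about choosing a covering of $M$, not about modifying prescribed data --- and in any case the ambient interiors of the $K_s$ can never cover all of $K=\bigcup_s K_s$, so the Lebesgue-number step must be formulated with relative interiors and is an additional hypothesis, not bookkeeping. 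Second, the problematic pairs are nearby points $x,y$ lying in different pieces: chaining through intersection points only gives $|g(x)-g(y)|\leq \sum_s [g]_{\alpha,K_s}\,\mathrm{diam}(K_s)^{\alpha}$, and dividing by $d(x,y)^{\alpha}$ blows up as $d(x,y)\to 0$. This is not a removable technicality: for arbitrary finite collections the inequality in (II) is simply false for the H\"older and Lipschitz seminorms. Take $K_1=[0,1]$, $K_2=[2,3]$ in $\mathbb{R}$ and $g\equiv 0$ on $K_1$, $g\equiv 1$ on $K_2$; then $[g]_{\alpha,K_1}=[g]_{\alpha,K_2}=0$ while $[g]_{\alpha,K}>0$, so no constant $C(\{K_s\})$ exists. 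Even nonempty intersection does not save the estimate: with $K_1=\{(t,0):t\in[0,1]\}$, $K_2=\{(t,t^2):t\in[0,1]\}$ and $g=0$ on $K_1$, $g(t,t^2)=t$ on $K_2$, the quotient at $x=(t,0)$, $y=(t,t^2)$ is $t^{1-2\alpha}$, unbounded for $\alpha>\tfrac12$ (and for Lipschitz), although $[g]_{\alpha,K_1}+[g]_{\alpha,K_2}$ is finite. So the validity of (II) for these families genuinely depends on restrictions on the admissible compact sets (substantial overlap of consecutive pieces, as in the cited Gluing Principles), and the step you describe as ``purely metric bookkeeping'' is precisely where a proof must either import those hypotheses or cite the gluing principles, as the paper does.
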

\begin{proof}
In both cases, Property (I) follows from the H\"older Rescaling Principle~\cite[Proposition 2.2]{deFariaHazardTresser2017b}.
Property (II) is a straightforward corollary of the Second H\"older Gluing Principle~\cite[Proposition 2.4]{deFariaHazardTresser2017b} in the H\"older case, 
and the Lipschitz Gluing Principle~\cite[Lemma B.1]{deFariaHazardTresser2017b} in the Lipschitz case.
Property (III) follows trivially from the definition of the $\alpha$-H\"older and Lipschitz semi-norms.
\end{proof}
For each integer $k\geq 0$ and $\alpha\in (0,1)$, now consider the family of {\it $C^{k+\alpha}$-semi-norms} defined by 
\begin{equation}
[f]_{C^{k+\alpha},K}=[f]_{C^k,K}+[D^k f]_{\alpha,K}
\end{equation}
where, as before, 
$K\subset\mathbb{R}^m$ is compact and 
$f$ is a $C^k$-mapping from a neighbourhood of $K$ to $\mathbb{R}^d$, 
whose $k$th derivative is $\alpha$-H\"older continuous on $K$.
Define the family of {\it $C^{k+\mathrm{Lip}}$-semi-norms} 
$[\,\cdot\,]_{C^{k+\mathrm{Lip}},K}$ similarly.

Given manifolds $M$ and $N$ above, the weak and strong generalized Whitney topologies 
for the family of $C^{k+\alpha}$-semi-norms and will be called respectively
the {\it weak and strong $C^{k+\alpha}$-Whitney topologies}. 
The {\it weak and strong $C^{k+\mathrm{Lip}}$-Whitney topologies} are defined analogously.
In~\cite{deFariaHazardTresser2017b}, 
for $k=0$, these were termed the 
{\it $\alpha$-H\"older-Whitney} and the {\it Lipschitz-Whitney topologies} respectively. 

By Remark~\ref{rmk:sums_of_semi-norm_families}, 
Propositions~\ref{prop:Ck-Whitney_is_I--III} and~\ref{prop:Holder-Whitney_is_I--III}
imply that
the family of $C^{k+\alpha}$-semi-norms and $C^{k+\mathrm{Lip}}$-semi-norms also satisfy properties (I)--(III).
By Theorem~\ref{thm:Whitney_topologies_are_Baire} this implies the following.
\begin{theorem}
For each integer $k\geq 0$ and $\alpha\in [0,1)$, the weak and strong $C^{k+\alpha}$-Whitney topologies 
are Baire. 
The weak and strong $C^{k+\mathrm{Lip}}$-Whitney topologies 
are also Baire.
\end{theorem}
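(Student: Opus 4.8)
The plan is to reduce the statement to the main theorem, Theorem~\ref{thm:Whitney_topologies_are_Baire}: it suffices to verify that, for every integer $k\ge 0$ and every $\alpha\in[0,1)$, the family of $C^{k+\alpha}$-semi-norms (and likewise the family of $C^{k+\mathrm{Lip}}$-semi-norms) satisfies properties (I)--(III). When $\alpha=0$ there is nothing new to do: $C^{k+0}=C^k$ by convention, the $C^{k+0}$-Whitney topologies are by definition the $C^k$-Whitney topologies, and the $C^k$-semi-norms satisfy (I)--(III) by Proposition~\ref{prop:Ck-Whitney_is_I--III}, so Theorem~\ref{thm:Whitney_topologies_are_Baire} applies directly.

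For $\alpha\in(0,1)$ I would write $[\,\cdot\,]_{C^{k+\alpha},K}=[\,\cdot\,]_{C^k,K}+[D^k\,\cdot\,]_{\alpha,K}$ and treat the two summands separately. The family $K\mapsto[\,\cdot\,]_{C^k,K}$ satisfies (I)--(III) by Proposition~\ref{prop:Ck-Whitney_is_I--III}. For the family $\mathcal{G}\colon K\mapsto\bigl(g\mapsto[D^kg]_{\alpha,K}\bigr)$ I would check (I)--(III) by hand: Monotonicity (III) is immediate from the definition of the $\alpha$-H\"older semi-norm; Gluing (II) follows by applying the Second H\"older Gluing Principle~\cite[Proposition~2.4]{deFariaHazardTresser2017b} to the map $D^kg$; and the Composition property (I), the only step with real content, follows from the H\"older Rescaling Principle~\cite[Proposition~2.2]{deFariaHazardTresser2017b} together with the Composition Mapping Formula, along the lines of the proof of Proposition~\ref{prop:Ck-Whitney_is_I--III}. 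Concretely, one expands $D^k(\psi\circ g\circ\varphi^{-1})$ into a finite sum of products of a derivative of $\psi$ evaluated at $g(\cdot)$ with derivatives of $g$ and of $\varphi^{-1}$; telescoping the difference of the expansions for $g$ and $g'$, applying the triangle inequality, estimating the $\alpha$-H\"older semi-norm of each product by the sup-norms and $\alpha$-H\"older semi-norms of its factors, and using the Mean Value Theorem to control the factor $D^j\psi\circ g-D^j\psi\circ g'$, one obtains the desired bound by $\kappa\,\|g-g'\|_K$ with $\kappa$ of the stated form. The $C^{k+\mathrm{Lip}}$ case is handled identically, with the Lipschitz Gluing Principle~\cite[Lemma~B.1]{deFariaHazardTresser2017b} in place of the H\"older one (the Rescaling Principle covers the Lipschitz case as well). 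Since $\mathcal{G}$ satisfies (I)--(III), Remark~\ref{rmk:sums_of_semi-norm_families} shows that the sum $[\,\cdot\,]_{C^{k+\alpha},K}=[\,\cdot\,]_{C^k,K}+[D^k\,\cdot\,]_{\alpha,K}$ does as well, and similarly for $C^{k+\mathrm{Lip}}$.

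Finally, with (I)--(III) verified for the defining families, Theorem~\ref{thm:Whitney_topologies_are_Baire} applied to $\mathcal{F}=\{[\,\cdot\,]_{C^{k+\alpha},K}\}$ (resp. $\mathcal{F}=\{[\,\cdot\,]_{C^{k+\mathrm{Lip}},K}\}$) yields that both the weak and the strong $C^{k+\alpha}$- (resp. $C^{k+\mathrm{Lip}}$-) Whitney topologies on $C^\mathcal{F}(M,N)$ satisfy the Baire property, which is the assertion. I expect the main obstacle to be precisely the verification of the Composition property (I) for $[D^k\,\cdot\,]_{\alpha,K}$: keeping track of the iterated chain-rule expansion and of which factors must be estimated in sup-norm versus H\"older semi-norm is the only genuinely technical point, and everything surrounding it is formal, being a direct appeal to Theorem~\ref{thm:Whitney_topologies_are_Baire}, Remark~\ref{rmk:sums_of_semi-norm_families}, and Proposition~\ref{prop:Ck-Whitney_is_I--III}.
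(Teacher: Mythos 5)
Your proposal is correct and follows essentially the same route as the paper: decompose $[\,\cdot\,]_{C^{k+\alpha},K}$ as $[\,\cdot\,]_{C^k,K}+[D^k\,\cdot\,]_{\alpha,K}$, verify (I)--(III) for the pieces via Proposition~\ref{prop:Ck-Whitney_is_I--III} and the H\"older/Lipschitz principles behind Proposition~\ref{prop:Holder-Whitney_is_I--III}, combine them with Remark~\ref{rmk:sums_of_semi-norm_families}, and invoke Theorem~\ref{thm:Whitney_topologies_are_Baire}. The only difference is that you spell out the chain-rule verification of property (I) for $[D^k\,\cdot\,]_{\alpha,K}$, which the paper leaves implicit in its citation of Proposition~\ref{prop:Holder-Whitney_is_I--III} (stated for H\"older maps into a Banach space such as $L^k_{\mathrm{sym}}(\mathbb{R}^m,\mathbb{R}^d)$).
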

\begin{remark}
For $k=0$
this result, in a slightly modified form, was stated as~\cite[Proposition 2.1]{deFariaHazardTresser2017b}. 
(There it is stated for bi-$\alpha$-H\"older homeomorphisms but, as mentioned in Remark~\ref{rmk:mappings_not_homeos}, 
the same argument goes through in both cases with obvious changes.)
\end{remark}
%
\begin{comment}
The more well-known argument, given in~\cite{HirschBook}, uses a construction involving jet spaces.
However, it is unclear to us if this argument may be modified for the case $k=0$.
\end{comment}
%
\subsection{Sobolev-Whitney topologies}
We now consider the generalized Whitney topology for families of Sobolev norms.
For background on Sobolev mappings in Euclidean spaces we suggest~\cite{Maz'yaBook,ZiemerBook}.

For each integer $k\geq 1$ and $1\leq p< \infty$, consider the family of {\it $W^{k,p}$-semi-norms}
\begin{equation}
[f]_{k,p,K}
=
\left(\int_K \sum_{0<j\leq k}|D^jf(x)|^p d\mu(x)\right)^{\frac{1}{p}}
\end{equation}
where 
$K\subset \mathbb{R}^m$ is compact,
$\mu$ denotes Lebesgue measure on $\mathbb{R}^m$, and 
$f$ is a continuous function from $K$ to $\mathbb{R}^d$, whose components $f_1,f_2,\ldots,f_d$ all lie in the Sobolev class $W^{k,p}$.
Recall that this means that 
all weak derivatives of order $k$ or less exist in some neighbourhood of $K$, and 
these weak derivatives lie in $L^p_\mu(K)$.
Denote by 
$W^{k,p}(K,\mathbb{R}^d)$ the space of all $W^{k,p}$-Sobolev maps between $K$ and $\mathbb{R}^d$,
and let 
$\mathbb{W}^{k,p}(K,\mathbb{R}^d)=W^{k,p}(K,\mathbb{R}^d)\cap C^0(K,\mathbb{R}^d)$.
Endowed with the norm
\begin{equation}
\|\cdot\|_{\mathbb{W}^{k,p}(K,\mathbb{R}^d)}
=
\|\cdot\|_{C^0(K,\mathbb{R}^d)}
+
[\,\cdot\,]_{k,p,K} \ ,
\end{equation}
the space 
$\mathbb{W}^{k,p}(K,\mathbb{R}^d)$ is a Banach space.

\begin{proposition}\label{prop:Sobolev-Whitney_is_I--III}
For $p>m$, the family of $W^{k,p}$-semi-norms satisfies properties (I)--(III).
\end{proposition}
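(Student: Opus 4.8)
The plan is to verify the three structural properties (I)--(III) for the family of $W^{k,p}$-semi-norms, working throughout on the Banach space $\mathbb{W}^{k,p}(K,\mathbb{R}^d)$ and crucially exploiting the Sobolev embedding $W^{k,p}\hookrightarrow C^{k-1+\beta}$ valid when $p>m$, where $\beta=1-m/p\in(0,1)$. This embedding is what makes the hypothesis $p>m$ essential: it guarantees that the weak derivatives $D^j f$ for $j\le k-1$ are genuine continuous functions, with sup-norm bounds controlled by $\|f\|_{\mathbb{W}^{k,p}(K,\mathbb{R}^d)}$ on each fixed compact set, so that pointwise manipulations (chain rule, products) are legitimate and the only genuinely $L^p$ quantity to track is $D^k f$ itself.

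First I would dispose of the easy properties. Property (III) (monotonicity) is immediate, since shrinking the domain of integration only decreases $\int_{K'}\sum_{0<j\le k}|D^j f|^p\,d\mu$. Property (II) (gluing) follows from the elementary inequality $(\sum_s a_s)^{1/p}\le \sum_s a_s^{1/p}$ together with $\int_{\bigcup_s K_s}\le \sum_s \int_{K_s}$; the constant $C$ depends only on the finite cardinality of $S$ (or can be taken to be $1$ if one is careful with overlaps of measure zero, but $C=|S|$ certainly works). These mirror exactly the remark in the paper that the sup-norm satisfies (I)--(III), and no regularity hypothesis is needed for them.

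The main work is Property (I), the composition property, which as in Proposition~\ref{prop:Ck-Whitney_is_I--III} I would split into the two halves: (i) precomposition with $\varphi^{-1}$, i.e.\ bounding $[g\circ\varphi^{-1}-g'\circ\varphi^{-1}]_{k,p,\varphi(K)}$ by $\kappa_1[g-g']_{k,p,K}$ plus lower-order terms, and (ii) postcomposition with $\psi$, i.e.\ bounding $[\psi\circ g-\psi\circ g']_{k,p,K}$ by $\kappa_2\|g-g'\|_{\mathbb{W}^{k,p}(K,\mathbb{R}^d)}$. For (i) one applies the chain rule (Fa\`a di Bruno / the Composition Mapping Formula) to express $D^k(g\circ\varphi^{-1})(y)$ as a universal polynomial in $D^j g(\varphi^{-1}(y))$, $j\le k$, with coefficients built from derivatives of $\varphi^{-1}$; taking the difference of the expressions for $g$ and $g'$, telescoping, and using the pointwise sup-bounds on the lower-order derivatives of $g,g'$ (from Sobolev embedding) reduces everything to $\int |D^j g - D^j g'|^p$, followed by the change of variables $y=\varphi(x)$ whose Jacobian is bounded above and below on the compact $K$. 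For (ii) one similarly expands $D^k(\psi\circ g)(x)$ via the chain rule, telescopes the difference over the factors, and bounds each term: the factor containing $D^k(g-g')$ is integrated in $L^p$ and the remaining factors $D^{i_\ell}g'$, $D^{i_\ell}g$ with $|i_\ell|\le k-1$ are controlled in sup-norm, while the term where the difference lands on $D^j\psi(g)-D^j\psi(g')$ is handled by the Mean Value Theorem as $|D^{j+1}\psi|\cdot|g-g'|$, controlled by $\|g-g'\|_{C^0}$. The stated dependence of $\kappa$ is then exactly as required: for $\|g-g'\|_K$ small it depends only on derivatives of $\psi,\varphi$ and on $\max\{[g]_{k,p,K},[g']_{k,p,K}\}$ (the latter entering through the sup-bounds on lower-order derivatives, via the embedding constant).

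\textbf{Main obstacle.} The delicate point is the chain-rule term in (ii) for $j=k$, namely $\bigl|D^k\psi(g(x))(D^{i_1}g'(x),\dots,D^{i_k}(g-g')(x),\dots)\bigr|$ where one of the $i_\ell$ could equal $k$: when $|i_\ell|=k$ the factor $D^{i_\ell}(g-g')$ is only in $L^p$, not in $L^\infty$, and the companion factors like $D^{i_m}g'$ with $|i_m|=k$ (if $k$ could be split that way) would also be merely $L^p$ — but in fact the chain rule forces $\sum|i_\ell|=k$ so at most one index can equal $k$ and the rest are strictly smaller, hence bounded in sup-norm by Sobolev embedding; this is precisely where $p>m$ rescues the estimate, and I would make sure to emphasize that. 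A secondary technical nuisance is that the weak derivatives are only defined up to null sets, so all the pointwise chain-rule identities must be justified as holding $\mu$-a.e., which is standard once one knows $D^j g$ for $j\le k-1$ has a continuous representative and $D^k g \in L^p$; one invokes the standard fact that the composition formulas for Sobolev functions hold in the weak sense under exactly these hypotheses (see~\cite{Maz'yaBook,ZiemerBook}).
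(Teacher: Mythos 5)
Your overall architecture matches the paper's: (II) and (III) are dismissed as immediate consequences of the definition, and (I) is split into a precomposition estimate (chain rule plus change of variables, with the Jacobian bounded above and below on the compact) and a postcomposition estimate (chain rule, telescoping, Mean Value Theorem for the $D^j\psi(g)-D^j\psi(g')$ term, H\"older-type bounds for the rest). Where you genuinely diverge is in the analytic input that makes $p>m$ work. The paper does not use the Morrey embedding at all: it validates a pointwise (a.e.) Sobolev chain rule via the Cesari--Calder\'on theorem that $W^{1,p}$ maps with $p>m$ are differentiable almost everywhere \cite{Cesari1941,Calderon1951}, quoting \cite[Lemma 3.1]{deFariaHazardTresser2017a}, and then for $k=1$ the estimate needs no sup-norm control of any derivative of $g$: the only factors appearing are $Dg,D(g-g')\in L^p$ and $D\psi\circ g$, $D^2\psi$, which are bounded simply because $g,g'$ are continuous on the compact $K$ and $\psi$ is smooth. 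The general-$k$ case is only sketched in the paper (derivatives of order $j$ lie in $W^{k-j,p}$, a Sobolev version of the Composition Mapping Formula, plus the $C^k$ proposition); your embedding-based treatment of the intermediate-order factors, and your observation that at most one factor in each chain-rule term can be of top order, is an explicit way of filling in exactly what that sketch leaves implicit.

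One soft spot to flag in your route: the claim that $\sup_K|D^jf|$ for $j\le k-1$ is ``controlled by $\|f\|_{\mathbb{W}^{k,p}(K,\mathbb{R}^d)}$ on each fixed compact set'' is not true for an arbitrary compact $K\subset\mathbb{R}^m$. The embedding $W^{k,p}\hookrightarrow C^{k-1+\beta}$ requires an extension (e.g.\ Lipschitz) domain, and its constant depends on the geometry of that domain; for a thin or cusped $K$ (or one of measure zero, where $[f]_{k,p,K}$ can vanish) no such bound holds. You can repair this by noting that $g,g'\in C^{\mathcal F}(M,N)$ have finite norm on a slightly larger compact neighbourhood of $K$ with nice geometry inside the chart, and applying the embedding there, but then $\kappa$ depends on the norms over that larger set and on its geometry, which weakens the literal dependence asserted in the second sentence of property (I) (this does not hurt the way (I) is actually used later, where dependence on the compacts and charts is allowed, but it should be said). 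For $k=1$ the embedding is unnecessary altogether, since the only lower-order term is $g$ itself, continuous by the definition of $\mathbb{W}^{1,p}(K,\mathbb{R}^d)$; adopting the paper's a.e.-differentiability chain rule there, and reserving the embedding (with the caveat above, or with a restriction on the admissible compacts) for $k\ge 2$, gives the cleanest complete argument.
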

\begin{proof}
First consider (I) in the case $k=1$.
By~\cite{Cesari1941,Calderon1951}, 
maps of Sobolev class $W^{1,p}$ are differentiable 
Lebesgue almost everywhere, for $p>m$.
By the same argument as~\cite[Lemma 3.1]{deFariaHazardTresser2017a} 
this implies a Sobolev Chain Rule for pre- or post-composition by $C^1$-diffeomorphisms.
Now we break the proof into two parts:
\begin{enumerate}
\item[(i)]
$[g\circ \varphi-g'\circ\varphi]_{C^k,\varphi^{-1}(K)}\leq \kappa_1 [g-g']_{C^k,K}$
\item[(ii)]
$[\psi\circ g-\psi\circ g']_{C^k,\varphi(K)}\leq \kappa_2 \left(\|g-g'\|_{C^0(K,\mathbb{R}^d)}+[g-g']_{C^k,K}\right)$
\end{enumerate}
For (i), the Sobolev Chain Rule, together with the Change of Variable Formula for the Lebesgue integral gives
\begin{align}
&
\left[g\circ \varphi-g'\circ\varphi\right]_{1,p,\varphi^{-1}(K)}\notag\\
&=
\left(\int_{\varphi^{-1}(K)} 
|Dg(\varphi(x)) D\varphi(x) - Dg'(\varphi(x)) D\varphi(x)|^p
d\mu(x)\right)^{\frac{1}{p}}\\
&\leq  
\frac{\max\|D\varphi(x)\|}{\min |\mathrm{Jac} \; \varphi(x)|^{\frac{1}{p}}}
\cdot
\left(\int_{K} |Dg(y)- Dg'(y)|^pd\mu(y)\right)^{\frac{1}{p}} 
\end{align}
and (i) follows as $\varphi$ is a diffeomorphism and $\varphi^{-1}(K)$ is compact.
For (ii), by the Sobolev Chain Rule, then
telescoping the sum and applying triangle inequality, 
\begin{align}
[\psi\circ g-\psi\circ g']_{1,p,K}
&=
\left(\int_K 
|D\psi(g(x)) Dg(x) - D\psi(g'(x)) Dg'(x)|^p
d\mu\right)^{\frac{1}{p}}\\
&\leq 
\left(\int_K 
|D\psi(g(x)) - D\psi(g'(x))|^p |Dg(x)|^p
d\mu\right)^{\frac{1}{p}}\notag\\
&\qquad+\left(\int_K 
|D\psi(g(x))|^p |Dg(x) - Dg'(x)|^p
d\mu\right)^{\frac{1}{p}} \ .
\end{align}
By H\"older's inequality this is bounded from above by
\begin{align}
&
\max_{x\in K} |D\psi\circ g(x)-D\psi\circ g'(x)| \cdot [g]_{1,p,K}
+\max_{y\in g(K)} |D\psi(y)| \cdot [g-g']_{1,p,K}\notag\\
&\leq
\max_{x\in\bar{U}}|D^2\psi(x)| \cdot [g]_{1,p,K}\cdot \|g-g'\|_{C^0(K,\mathbb{R}^d)}
+\max_{y\in g(K)}|D\psi(y)| \cdot [g-g']_{1,p,K} \ .
\end{align}
The last inequality follows from the Mean Value Theorem~\cite[p.4]{AbrahamRobbinBook1967}, 
where $U$ is an open neighbourhood of the convex hull of $g(K)\cup g'(K)$.
Property (I) now follows.

Next, let us sketch the proof for arbitrary $k$. 
Observe that if $p>m$, 
then any $j$th order derivative is of class $W^{k-j,p}$, 
and thus also differentiable Lebesgue almost everywhere.  
The argument used for the Sobolev Chain Rule, 
together with the Composition Mapping Formula~\cite[p.3]{AbrahamRobbinBook1967} 
gives a Sobolev Composition Mapping Formula. 
Using this together with a natural modification 
to the argument above (and also Proposition~\ref{prop:Ck-Whitney_is_I--III}) shows that (I) holds.

Finally, (II) and (III) follow trivially from the definition of the $W^{k,p}$-semi-norm.
\end{proof}
Given manifolds $M$ and $N$ above, 
the weak and strong generalized Whitney topologies, 
corresponding to the family of $W^{k,p}$-semi-norms, are called respectively 
the {\it weak and strong $W^{k,p}$-Whitney topology} 
or {\it $(k,p)$-Sobolev-Whitney topology}.
The following Theorem is now a direct consequence of Proposition~\ref{prop:Sobolev-Whitney_is_I--III}.
\begin{theorem}
For $p>m$, the weak and strong $(k,p)$-Sobolev-Whitney topologies 
are Baire. 
\end{theorem}
\begin{remark}
When considering open mappings or homeomorphisms, rather than arbitrary mappings,
either 
(a) $m>2$ and $p>m-1$, or
(b) $m=2$ and $p\geq 1$,
is sufficient to show that $(k,p)$-Sobolev semi-norms satisfy properties (I)--(III).
The main point being that these conditions ensure, by~\cite{GehringLehto1959,Vaisala1965},
that maps of Sobolev class $W^{k,p}$ are differentiable Lebesgue almost everywhere.
This is enough to ensure~\cite[Lemma 3.1]{deFariaHazardTresser2017a} that the Sobolev Chain Rule holds.  

Using this, it then follows that the Baire property is satisfied in the space of
open $(k,p)$-Sobolev maps or homeomorphisms, 
with $(k^*,p^*)$-Sobolev inverse, under the weaker conditions
of either $p$ and $p^*$ satisfying either (a) or (b).
(This was previously stated in~\cite[Proposition 3.1]{deFariaHazardTresser2017b} in the case $k=k^*=1$.)
\end{remark}
\begin{remark}
The more familiar topology on spaces of Sobolev mappings~\cite{SchoenUhlenbeck1984,Bethuel1991} 
requires the manifolds to be endowed with a Riemannian structure, 
and then uses the isometric embeddings into Euclidean spaces.
The Sobolev-Whitney topologies described above do not depend upon a Riemannian structure. 
\end{remark}


\section*{Acknowledgements}
We would like to thank IME-USP for their constant support.
Thanks also go to Mario Bessa, Charles Tresser and Michael Benedicks for 
their questions and continued interest in this work.

\end{document}